\documentclass[12pt,draft]{amsart}

\usepackage[ansinew]{inputenc} 
\usepackage[T1]{fontenc}
\usepackage{textcomp}
\usepackage{mathcomp}
\usepackage{amsmath,amsthm,amssymb,amsfonts,amsxtra}
\usepackage{mathtools}
\usepackage{paralist}
\usepackage{float}
\usepackage{comment}
\usepackage{enumerate}
\usepackage{tikz-cd}
\usepackage{mathrsfs}

\makeatletter
\@namedef{subjclassname@2010}{%
  \textup{2010} Mathematics Subject Classification}
\makeatother

\frenchspacing

\textwidth=13.5cm
\textheight=23cm
\topmargin=-0.5cm

\newtheorem{thm}{Theorem}[section]
\newtheorem{cor}[thm]{Corollary}
\newtheorem{prop}[thm]{Proposition}
\newtheorem{lem}[thm]{Lemma}

\theoremstyle{definition}

\theoremstyle{remark}

\bibliographystyle{plain}

\newcommand{\set}[2]{\left \{ \: #1 \: \middle | \: #2 \: \right \} }
\newcommand{\join}[2]{J\left ( \: #1 \: \middle | \: #2 \: \right ) }

\newcommand{\exx}[1]{e^{i2\pi #1 }}
\newcommand{\wh}[1]{\widehat{#1}}
\newcommand{\wt}[1]{\widetilde{#1}}
\newcommand{\ov}[1]{\overline{#1}}

\newcommand{\conv}[1]{\stackrel{#1}{\longrightarrow}}

\newcommand{\norm}[1]{\lVert {#1} \rVert}

\DeclareMathOperator{\AP}{\mathcal{AP}}

\DeclareMathOperator{\LC}{\mathcal{LC}}
\DeclareMathOperator{\WAP}{\mathcal{WAP}}

\DeclareMathOperator{\ext}{\mathrm{ext} \,}

\DeclareMathOperator{\N}{\mathbb{N}}
\DeclareMathOperator{\Z}{\mathbb{Z}}
\DeclareMathOperator{\R}{\mathbb{R}}
\DeclareMathOperator{\Q}{\mathbb{Q}}
\DeclareMathOperator{\C}{\mathbb{C}}
\DeclareMathOperator{\T}{\mathbb{T}}

\DeclareMathOperator{\cA}{\mathcal{A}}
\DeclareMathOperator{\cB}{\mathcal{B}}
\DeclareMathOperator{\cC}{\mathcal{C}}
\DeclareMathOperator{\cD}{\mathcal{D}}

\DeclareMathOperator{\cF}{\mathcal{F}}
\DeclareMathOperator{\cG}{\mathcal{G}}
\DeclareMathOperator{\cH}{\mathcal{H}}

\DeclareMathOperator{\cM}{\mathcal{M}}

\DeclareMathOperator{\cQ}{\mathcal{Q}}

\DeclareMathOperator{\cU}{\mathcal{U}}

\DeclareMathOperator{\cX}{\mathcal{X}}

\DeclareMathOperator{\cnt}{\mathfrak{c}}


\begin{document}

\baselineskip=17pt

\title[Multiple disjointness and invariant measures]{Multiple disjointness and invariant measures on minimal distal flows}

\author[J. Rautio]{Juho Rautio}
\address{University of Oulu\\
   Department of Mathematical Sciences\\
   PL 8000\\
   FI-90014 Oulun yliopisto\\
   Finland}
\email{juho.rautio@oulu.fi}

\date{}

\begin{abstract}

As the main theorem, it is proved that a collection of minimal $PI$-flows with a common phase group and satisfying a certain algebraic condition is multiply disjoint if and only if the collection of the associated maximal equicontinuous factors is multiply disjoint. In particular, this result holds for collections of minimal distal flows. The disjointness techniques are combined with Furstenberg's example of a minimal distal system with multiple invariant measures to find the exact cardinalities of (extreme) invariant means on $\cD(\Z)$ and $\cD(\R)$, the spaces of distal functions on $\Z$ and $\R$, respectively. In all cases, this cardinality is $2^{\cnt}$. The size of the quotient of $\cD(\Z)$ or of $\cD(\R)$ by a closed subspace with a unique invariant mean is observed to be non-separable by applying the same ideas.

\end{abstract}

\subjclass[2010]{Primary 37B05; Secondary 43A60}

\keywords{PI-flow, distal flow, maximal equicontinuous factor, disjointness, invariant measure, invariant mean}

\maketitle

\section{Introduction}

The uniqueness of the normalised Haar measure on a compact Hausdorff topological group implies that there is a unique invariant mean on $\AP(T)$, the space of almost periodic functions on a topological group $T$. This invariant mean corresponds to the normalised Haar measure on the Bohr compactification $T^{\AP}$ of $T$, i.e., the universal topological group compactification. The space $\WAP(T)$ of weakly almost periodic functions on $T$ has likewise only one invariant mean or, equivalently, the universal semitopological semigroup compactification $T^{\WAP}$ of $T$ has a unique invariant probability measure, and again it is essentially the Haar measure on $T^{\AP}$, which we can see as the unique minimal ideal in $T^{\WAP}$ (see Theorems 2.14 and 3.12 in chapter 4 of \cite{bjm}, or Corollary 2.5 and Theorem 2.26 in \cite{burckel}). At the other end of the spectrum, if $T$ is an infinite, discrete, amenable group, then there are $2^{2^{|T|}}$ invariant means on $l^{\infty}(T)$ where $|T|$ is the cardinality of $T$ (\cite{chou76}). More generally, if $T$ is an amenable, locally compact, non-compact group and if $d$ denotes the smallest possible cardinality of a covering of $T$ by compact sets, then the space $\LC(T)$ of left norm continuous functions on $T$ has $2^{2^d}$ left invariant means (this is essentially proved in \cite{lp86}). The arguments involve counting the minimal left ideals in $T^{\LC}$, the compactification associated with $\LC(T)$. Recall that $T^{\LC}$ is the universal semigroup compactification of $T$ in the locally compact case (see \cite{bjm}, Theorem 5.7 on page 173). This line of research has been pursued further by Filali, Pym and Salmi (\cite{filpym03, filsalmi07}), and analogous results have been obtained in a Fourier algebra setting by Filali, Neufang and Monfared (\cite{fnm10}).

The work at hand was motivated by the problem of determining the cardinality of (left) invariant means on $\cD(T)$, the space of distal functions on $T$. This space can be used to construct the universal right topological group compactification of $T$ (\cite{bjm}, Theorem 6.5 on page 179), and it contains $\AP(T)$ but not necessarily $\WAP(T)$. The question can be formulated also as follows: what is the cardinality of invariant measures on the universal minimal distal flow with $T$ as the phase group? We address this problem in the cases $T = \Z$ and $T = \R$, and we shall show that the cardinality is as large as could be expected, namely $2^{\cnt}$ for both groups (Theorems~\ref{sec:cardz} and \ref{sec:cardr}). To be precise, we count the ergodic invariant measures on $T^{\cD}$, which are in essence the extreme invariant means on $\cD(T)$. As a by-product, we see that each minimal left ideal in $\beta \Z$ or in $\R^{\LC}$ supports $2^{\cnt}$ invariant measures (Corollary~\ref{sec:mlidc}).

For orientation, observe first that there must be more than one invariant measure on $\Z^{\cD}$ by Furstenberg's construction of a minimal, distal, non-uniquely ergodic system (\cite{furstenberg61}) since any invariant measure on a minimal distal system can be lifted to an invariant measure on the universal system $\Z^{\cD}$. By convexity, the cardinality we are after is at least $\cnt$, and on the other hand, the cardinality of invariant measures on the universal point-transitive system, i.e., the Stone-\v{C}ech compactification $\beta \Z$, sets an upper bound of $2^{\cnt}$. The latter can be obtained by counting the (mutually disjoint) minimal left ideals of $\beta \Z$, each of which supports at least one invariant measure (\cite{lp86}). This method is unsuitable in the distal case because $\Z^{\cD}$ is a group. In order to find $2^{\cnt}$ invariant measures on $\Z^{\cD}$, it is necessary to construct a single minimal distal system with this cardinality of invariant measures. Note that Furstenberg's example provides only a continuum of invariant measures, as it is defined on a metric space. However, taking an uncountable product of systems of this type leads to the desired conclusion: on each constituent system of the product, we can choose an invariant measure independently of the others, thus obtaining $2^{\cnt}$ distinct invariant product measures. The only problem that remains is ensuring that the product system is minimal, and to this end we need to study `multiple disjointness', generalising the usual notion of disjointness of two minimal flows to arbitrary collections. This is carried out in section \ref{sec:disjoint}, and the results we obtain are perhaps interesting in themselves. The main theorem is \ref{sec:main}, according to which the product of a collection of well-behaved minimal flows such as distal flows is minimal if and only if the product of the corresponding maximal equicontinuous factors is minimal. This is derived from a similar result in the context of two flows in \cite{egs76}. We also touch upon multiple disjointness for minimal equicontinuous flows with abelian phase group (Theorem~\ref{sec:indperp}), and we show how to construct an uncountable collection of minimal metric $PI$-flows with phase group $\R$ for which the product flow is minimal (Theorem~\ref{sec:perpreparam}).

The last section covers the arguments sketched above in detail for $\Z$ and also for $\R$. As a related phenomenon, the disjointness techniques and non-uniquely ergodic constructions are applied to show that, for any closed subspace $V$ of $\cD(T)$ with a single invariant mean, at least when $T = \Z$ or $T = \R$, the quotient $\cD(T)/V$ is non-separable. Results of this type have been obtained for other pairs of function spaces frequently encountered in abstract harmonic analysis (see for example the papers of Chou (\cite{chou82}) and Bouziad and Filali (\cite{boufil11})). The recent work of Filali and Galindo (\cite{filgal13}) contains a historical overview of the research in this area.

\section{Preliminaries}

The reader is assumed to be familiar with semigroup compactifications and the associated $m$-admissible function algebras. We follow \cite{bjm} in notations and terminology regarding this topic. In addition, the reader should be acquainted with topological dynamics, especially the algebraic aspects of flows on compact spaces. For background material on this subject, see \cite{auslander}, \cite{devries} and, to a lesser extent, \cite{bjm}. We recall the essentials concepts as well as some of the more specialised aspects of topological dynamics.

All topological spaces are assumed to be Hausdorff. The $C^{\ast}$-algebra of bounded, complex-valued functions on a set $X$ with supremum norm is $\cB(X)$. If $X$ is a topological space, then $\cC(X) \subseteq \cB(X)$ is the subspace of continuous functions. If $\pi \colon X \to Y$ is a mapping between two sets, we define an adjoint mapping $\pi^{\ast} \colon \cB(Y) \to \cB(X)$ by $\pi^{\ast}f = f \circ \pi$, $f \in \cB(Y)$. The weak$^{\ast}$ compact, convex set of all means on a subspace $\cF \subseteq \cB(X)$ is $M(\cF)$, and if $\cF$ is an algebra, then the set of multiplicative means on $\cF$ is denoted by $MM(\cF)$. The Stone-\v{C}ech compactification of a topological space $X$ is then $\beta X = MM(\cC(X))$, paired with the \emph{evaluation} $\epsilon \colon X \to \beta X$, that is, for $x \in X$, we define $\epsilon(x) \colon \cC(X) \to \C$ by $\epsilon(x)(f) = f(x)$, $f \in \cC(X)$.

When $S$ is a semigroup, the \emph{left} and \emph{right translations} by $s \in S$ are denoted by $\lambda_s$ and $\rho_s$. We define corresponding operators on $\cB(S)$ by $L_s = \lambda_s^{\ast}$ and $R_s = \rho_s^{\ast}$. If $S$ is equipped with a topology, it is \emph{right topological} if all right translations are continuous. A semigroup compactification of a topological group $T$ is a pair $(\phi,X)$ where $X$ is a right topological semigroup and $\phi \colon T \to X$ is a continuous homomorphism such that $\phi(T)$ is dense in $X$ and contained in the \emph{topological centre} $\Lambda(X)$ of all elements $x \in X$ for which $\lambda_x \colon X \to X$ is continuous. All semigroup compactifications of $T$ can be realised as pairs $(\epsilon,MM(\cF))$ where $\cF \subseteq \cC(X)$ is an $m$-admissible subalgebra, meaning that $\cF$ is a left and right translation invariant $C^{\ast}$-subalgebra containing the constants such that $T_\mu f(t) = \mu(L_t f)$, $t \in T$, defines a member of $\cF$ for any $\mu \in MM(\cF)$ and $f \in \cF$. The left introversion operators $T_\mu \colon \cF \to \cF$, $\mu \in MM(\cF)$, define a right topological semigroup structure by $\mu \nu = \mu \circ T_\nu$, $\mu, \nu \in MM(\cF)$.

For a topological group $T$, the space of left norm continuous functions on $T$ is $\LC(T)$, which is also the $m$-admissible subalgebra of $\cC(T)$ consisting of all functions that are uniformly continuous with respect to the right uniform structure on $T$. The corresponding compactification of $T$ is $T^{\LC} = MM(\LC(T))$. It is universal with respect to the joint continuity property, i.e., if $(\phi,X)$ is any semigroup compactification of $T$ such that the mapping $(t,x) \mapsto \phi(t)x : T \times X \to X$ is jointly continuous, then $(\phi,X)$ is a factor of $(\epsilon,T^{\LC})$ or, equivalently, $\phi^{\ast}\cC(X) \subseteq \LC(T)$. We shall usually refer to compactifications without the homomorphisms.

By a \emph{flow} or a $T$\emph{-flow} we mean a triple $(T,X,\alpha)$ consisting of a topological group $T$ (\emph{phase group}), a compact space (\emph{phase space}) and a continuous action $\alpha \colon T \times X \to X$. The action $\alpha$ is usually omitted from notation (so $\alpha(t,x) = tx$ and $(T,X,\alpha) = (T,X)$), and we shall often refer to the flow by its phase space alone. A \emph{dynamical system} or simply a \emph{system} is a $\Z$-flow written as a pair $(X,T)$ where $X$ is the phase space and $T \colon X \to X$ is the homeomorphism $Tx = 1x$, $x \in X$. We say that a flow $(T,Y)$ is a \emph{subflow} of $(T,X)$ if $Y \subseteq X$ is a non-empty, closed, $T$-invariant set. If $\{X_i\}_{i \in I}$ is a collection of $T$-flows, then the \emph{product flow} $X = \prod_{i \in I} X_i$ is a $T$-flow with the action defined by $(tx)_i = tx_i$, $t \in T$, $x \in X$, $i \in I$. 

A continuous, surjective, $T$-equivariant mapping $\pi \colon X \to Y$ between $T$-flows is a \emph{homomorphism}, and then $Y$ is said to be a \emph{factor} of $X$. Such a mapping $\pi$ induces a relation $R(\pi)$ on $X \times X$ defined as the set of all pairs $(x,x^{\prime})$ satisfying $\pi(x) = \pi(x^{\prime})$. It is a \emph{factor relation}, i.e., a closed equivalence relation that is invariant as a subset of the product flow $(T,X \times X)$. All factor relations on $X$ arise in this manner; if $R$ is a factor relation, then the quotient space $X/R$ is a compact Hausdorff space on which $T$ acts in a natural way so that the quotient map becomes a homomorphism. An \emph{isomorphism} is an injective homomorphism.

A point $x \in X$ in a $T$-flow is \emph{transitive} if its \emph{orbit} $Tx$ is dense. The flow $X$ is \emph{minimal} if it does not contain proper subflows or, equivalently, if all points are transitive. An ambit or a $T$-ambit is a pair $(X,x)$ where $X$ is a $T$-flow and the \emph{base point} $x \in X$ is transitive. A $T$-ambit $(X,x)$ can be represented as left translation invariant $C^{\ast}$-subalgebra of $\LC(T)$ that contains the constants; the operator $\phi_x \colon \cC(X) \to \LC(T)$, defined by $\phi_x f(t) = f(tx)$ for $f \in \cC(X)$ and $t \in T$, is an isometric $\ast$-homomorphism, and $\phi_x \cC(X)$ characterises $(X,x)$ up to ambit isomorphism (an isomorphism of flows that maps base point to base point). Note that all semigroup compactifications of a topological group $T$ with the joint continuity property can be regarded as ambits; the most natural choice for a base point is the identity element. Another common way to construct $T$-ambits is to select a function $f \in \LC(T)$ and to define the phase space as $X_f = \set{T_\mu f}{\mu \in T^{\LC}}$ with the topology of pointwise convergence, so $T$ acts on $X_f$ by right translations, and the base point is $f$. See \cite{devries}, section 5 in chapter 4 for more information on these matters (note also that $\LC(T)$ is denoted by $RUC^{\ast}(T)$ in this source).

The \emph{enveloping semigroup} of a flow $(T,X)$ is the right topological semigroup $E(T,X)$ or just $E(X)$ defined as the closure in $X^X$ of the set of all mappings of the form $x \mapsto tx : X \to X$, $t \in T$, equipped with the topology of pointwise convergence and composition as the semigroup operation. We may regard $E(X)$ as a semigroup compactification of $T$ with the joint continuity property. It acts on $X$ in a natural way, although this action is not jointly continuous, in general. We can also define an action of $T^{\LC}$ on $X$ via the canonical homomorphism from $T^{\LC}$ to $E(X)$.

Let $X$ be a $T$-flow. The \emph{proximal relation} $P(X)$ is defined as the set of all pairs $(x,x^{\prime}) \in X \times X$ for which $ax = ax^{\prime}$ for some $a \in E(X)$. These pairs are called \emph{proximal pairs}, and non-proximal pairs are \emph{distal pairs}. A \emph{distal point} is a point $x \in X$ such that $(x,x^{\prime})$ is a distal pair for all $x^{\prime} \in X \setminus \{x\}$. The flow $X$ is \emph{distal} if all points are distal, i.e., if $P(X) = \Delta_X$. A well-known characterisation of distality states that $X$ is distal if and only if $E(X)$ is a group. A function $f \in \LC(T)$ is \emph{distal} if the flow $X_f$ is distal, and $\cD(T)$ denotes the $m$-admissible algebra of all distal functions on $T$. The compactification $T^{\cD} = MM(\cD(T))$ is the universal (right topological) group compactification of $T$, and it is also the universal minimal distal $T$-flow in the sense that all other minimal distal $T$-flows are factors of $T^{\cD}$. A flow $(T,X)$ is \emph{equicontinuous} if $E(X)$ is an equicontinuous family of functions. This condition is equivalent to $E(X)$ being a topological group of homeomorphisms, so equicontinuity implies distality. The notions of proximality, distality and equicontinuity can also be defined for homomorphisms of flows, and the domains (as flows) of such homomorphisms are called \emph{proximal, distal} and \emph{equicontinuous extensions} of the co-domains, respectively. See \cite{auslander} or \cite{devries} for the definitions.

Let $T$ be a topological group, and let $u \in T^{\LC}$ be a minimal idempotent, so $G = u T^{\LC} u$ is algebraically a group with identity $u$. We say that a $T$-ambit $(X,x)$ is a $u$\emph{-ambit} if $ux = x$. The universal $u$-ambit is $(T^{\LC}u,u)$, which is a minimal flow since $T^{\LC}u$ is a minimal left ideal in $T^{\LC}$, so $u$-ambits are minimal. Conversely, any minimal $T$-flow $X$ can be turned into a $u$-ambit by picking a base point from the set $uX$. The \emph{structure group} of a $u$-ambit $(X,x)$ is the subgroup $\cG(X,x)$ of all $g \in G$ with $gx = x$. The group $G$ carries a topology called the $\tau$-topology, which is weaker than the relative topology inherited from $T^{\LC}$, and it is defined with respect to $u$ in such a way that $G$ becomes compact and $T_1$ with separately continuous group operation and continuous inversion. The structure groups of $u$-ambits are $\tau$-closed. See \cite{auslander} or \cite{devries} for more details.

A flow is said to be \emph{strictly} $PI$ if it is minimal and if there exists a transfinite sequence of factors of $X$, starting with the trivial flow, such that each successor is either an equicontinuous or a proximal extension of the previous one, and limit ordinals correspond to inverse limits of flows. A flow is $PI$ if it is a factor of a strictly $PI$-flow via a proximal homomorphism. All minimal distal flows are strictly $PI$ by the famous Furstenberg structure theorem, and point-distal minimal flows, i.e., those with a distal transitive point, are $PI$ (see \cite{devries}, Corollary 4.49 on page 577). An algebraic characterisation states that, if $(X,x)$ is a $u$-ambit for some minimal idempotent $u \in T^{\LC}$, then it is $PI$ if and only if the structure group $\cG(X,x)$ contains a certain subgroup $G_\infty$ of $G$ (\cite{egs75}, also \cite{auslander}, Theorem 23 on page 217). In the structure theory of minimal flows, in particular in the cited works, the phase group is often assumed to be discrete, so $T^{\LC} = \beta T$. But the algebraic characterisation of $PI$-flows holds in the more general topological case as well; all the arguments and constructions are analogous. The convention of using a topological phase group is followed in \cite{devries}.

Consider a flow $(T,X)$, and let $\cM(X)$ be the weakly compact, convex space of all regular probability measures defined on the Borel sets of $X$, so $\cM(X)$ can be identified with $M(\cC(X))$. We say that $\mu \in \cM(X)$ is an \emph{invariant measure} if $\mu(tA) = \mu(A)$ for all $t \in T$ and all Borel sets $A \subseteq X$. Let $\cM(T,X)$ denote the (possibly empty) closed, convex set of all invariant measures on $X$. We say that $\mu \in \cM(T,X)$ is ergodic if the invariant Borel sets $A \subseteq X$ satisfy $\mu(A) = 1$ or $\mu(A) = 0$. When $T$ is locally compact and second countable, the ergodic measures coincide with the extreme points of $\cM(T,X)$ (see \cite{bekkamayer}, Proposition 3.1). In general, the extreme points of $\cM(T,X)$ are ergodic. The invariant measures on $T^{\LC}$ correspond to the \emph{left invariant means} on $\LC(T)$, i.e., if $\mu \in \cM(T^{\LC})$, then $\mu$ is invariant if and only if the corresponding mean on $\LC(T)$, which we still denote by $\mu$, satisfies $\mu \circ L_t = \mu$ for all $t \in T$. Here we have also identified $\LC(T)$ with $\cC(T^{\LC})$ in a canonical way. We say that $(T,X)$ is \emph{uniquely ergodic} if $\cM(T,X)$ is a singleton, in which case the unique invariant measure is ergodic.

If $T$ is a locally compact group, it is amenable if the set $LIM(\LC(T))$ of left invariant means on $\LC(T)$ is nonempty or, equivalently, if all $T$-flows admit an invariant measure. All locally compact abelian groups are amenable. If $X$ and $Y$ are $T$-flows for a locally compact, amenable group $T$ and if $\pi \colon X \to Y$ is a homomorphism, then any invariant measure on $Y$ can be lifted to an invariant measure on $X$, that is, if $\nu \in \cM(T,Y)$, then there is some $\mu \in \cM(T,X)$ such that $\pi_{\ast} \mu = \nu$ where $\pi_{\ast} \colon \cM(X) \to \cM(Y)$ is the pushforward operator.

Suppose that $T$ is locally compact and amenable. A function $f \in \LC(T)$ is \emph{(left) almost convergent} to $c \in \C$ if $\mu(f) = c$ for all $\mu \in LIM(\LC(T))$. A $T$-ambit $(X,x)$ is uniquely ergodic if and only if the functions in $\phi_x \cC(X)$ are all left almost convergent.

\section{Maximal equicontinuous factors and disjointness}
\label{sec:disjoint}

Consider two minimal flows $X$ and $Y$ with a common phase group. Recall that they are said to be \emph{disjoint} if the product flow $X \times Y$ is minimal, and this is denoted by $X \perp Y$. If $\cX = \{X_i\}_{i \in I}$ is a collection of minimal flows with a common phase group, we say that $\cX$ is \emph{multiply disjoint} if the product flow $\prod_{i \in I} X_i$ is minimal, and we denote this by $\perp \cX$. Note that multiple disjointness of a collection of flows is equivalent to multiple disjointness of all finite subcollections due to the nature of the product topology. Chapter 11 of \cite{auslander} provides a concise treatment on the subject of disjointness, and the notion of multiple disjointness is taken from an exercise at the end of it. The purpose of this section is to find conditions that imply multiple disjointness. The main result we shall obtain is Theorem~\ref{sec:main}, which states that, for a collection of suitably `nice' minimal flows, multiple disjointness follows from the multiple disjointness of their maximal equicontinuous factors.

Recall that any flow $(T,X)$ has a \emph{maximal equicontinuous factor} $X^{eq}$, an equicontinuous factor of $X$ such that all other equicontinuous factors of $X$ are also factors of $X^{eq}$. If $\pi \colon X \to X^{eq}$ is a homomorphism, then $EQ(X) = R(\pi)$ is the \emph{equicontinuous structure relation} on $X$. It is the intersection of all factor relations on $X$ that induce an equicontinuous factor. An alternative way of defining $EQ(X)$ is based on the \emph{regionally proximal relation} $Q(X)$: if $\cU_X$ denotes the base for the uniform structure on $X$ consisting of neighbourhoods of the diagonal $\Delta_X \subseteq X \times X$, then $Q(X)$ is the intersection
	\[Q(X) = \bigcap_{U \in \cU_X} \ov{TU}.
\]
Here the elements $U \in \cU_X$ are treated as subsets of the product flow $X \times X$. Now, $EQ(X)$ is the smallest factor relation on $X$ that contains $Q(X)$. The latter is reflexive, symmetric, closed and invariant. Under certain dynamical conditions, $Q(X)$ is also transitive, so $Q(X) = EQ(X)$ (see \cite{devries}, remark 3 on page 400). Note that the flow $X$ also has a \emph{maximal distal factor} and a corresponding \emph{distal structure relation}, which is the smallest factor relation on $X$ that contains the proximal relation $P(X)$.

Disjointness of two minimal flows was studied on a very general level by Ellis, Glasner and Shapiro in \cite{egs76} in terms of algebras of functions defined on the phase group. In the cited paper, the phase group $T$ is discrete, so $T^{\LC} = \beta T$. One of the important subalgebras of $\LC(T) = l^{\infty}(T)$ used in \cite{egs76} is $\mathscr{K}$, which is defined with respect to a fixed minimal idempotent $u \in \beta T$ as follows:
	\[\mathscr{K} = \set{f \in l^{\infty}(T)}{T_u R_t T_u f = R_t f \text{ for all } t \in T}.
\]
Note that, for a $T$-ambit $(X,x)$, we have $\phi_x \cC(X) \subseteq \mathscr{K}$ if and only if $utux = tx$ for all $t \in T$. In this case, we say that $(X,x)$ is a $K(u)$\emph{-ambit} and that $x$ is a $K(u)$\emph{-point}. Such ambits have the nice property that $Q(X) = EQ(X)$ (\cite{ek71}). Any $K(u)$-ambit is a $u$-ambit. When $T$ is abelian, the two notions coincide. Also, if the base point $x$ of a $T$-ambit $(X,x)$ is distal, then $(X,x)$ is a $K(u)$-ambit. The well-known fact that $Q(X) = EQ(X)$ when $X$ is a minimal distal flow can be seen as a corollary of this observation.

The following theorem is translated into the language of ambits from the original presentation.

\begin{thm}
[{\cite[Theorem 4.2]{egs76}}]
\label{sec:egs}
Let $T$ be a discrete group, let $u \in \beta T$ be a minimal idempotent, and let $(X,x)$ and $(Y,y)$ be $u$-ambits with phase group $T$. Suppose that the following conditions hold:
\begin{itemize}
\item[(i)] $x$ or $y$ is a $K(u)$-point;
\item[(ii)] $G_\infty \subseteq \cG(X,x) \cG(Y,y)$;
\item[(iii)] $X^{eq} \perp Y^{eq}$.
\end{itemize}
Then, $X \perp Y$.
\end{thm}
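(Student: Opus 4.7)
The plan is to translate the problem into a group-theoretic calculation in the Ellis group $G = uT^{\LC}u$. Write $A = \cG(X,x)$ and $B = \cG(Y,y)$.

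The starting point is the algebraic characterisation of disjointness: two minimal $u$-ambits are disjoint if and only if the product of their structure groups fills $G$. Explicitly, the orbit closure of $(x,y)$ in $X \times Y$ is itself a minimal $u$-ambit with structure group $A \cap B$, and a straightforward coset computation shows it equals $X \times Y$ precisely when $AB = G$. The task thus reduces to showing $AB = G$ from the three hypotheses.

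Next I translate condition (iii). The structure group of the maximal equicontinuous factor of a minimal $u$-ambit $(X,x)$ is the smallest $\tau$-closed subgroup of $G$ containing $A$ and $G_\infty$. For a $K(u)$-ambit this subgroup is exactly $AG_\infty$, with no further $\tau$-closure needed, because the regionally proximal relation $Q(X)$ coincides with the equicontinuous structure relation $EQ(X)$. Combining (i) with the analogous description of $\cG(Y^{eq})$ and using normality of $G_\infty$ in $G$, the condition $\cG(X^{eq}) \cdot \cG(Y^{eq}) = G$ becomes
\[
    ABG_\infty = G.
\]

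It now remains to combine this with (ii). Given $g \in G$, factor $g = abc$ with $a \in A$, $b \in B$, $c \in G_\infty$. By normality of $G_\infty$, the element $c' = bcb^{-1}$ lies in $G_\infty$ and satisfies $bc = c'b$. Hypothesis (ii) yields a factorisation $c' = \alpha\beta$ with $\alpha \in A$, $\beta \in B$, and substituting gives
\[
    g = ac'b = (a\alpha)(\beta b) \in AB.
\]
Thus $AB = G$, and hence $X \perp Y$ by the criterion above.

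The main obstacle is the precise identification of $\cG(X^{eq})$ and $\cG(Y^{eq})$ when only one of the two ambits is assumed to be $K(u)$: on the other side, $\cG(Y^{eq})$ is a priori only a $\tau$-closure of $BG_\infty$ and might be strictly larger, in which case the product $\cG(X^{eq})\cG(Y^{eq})$ could exceed $ABG_\infty$. Reconciling this asymmetry so that (iii) still implies $ABG_\infty = G$ is the technical crux, and this is the point at which condition (i) is essential.
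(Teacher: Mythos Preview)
The paper does not supply a proof of this statement at all: Theorem~\ref{sec:egs} is simply quoted from \cite[Theorem~4.2]{egs76}, with only a remark afterwards about how to transport the result from discrete to topological phase groups. There is therefore no ``paper's own proof'' to compare your attempt against.

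That said, your outline follows the spirit of the original Ellis--Glasner--Shapiro argument, which indeed works by reducing disjointness to the equation $AB = G$ in the Ellis group and then exploiting the algebraic description of equicontinuous factors. Your final algebraic manipulation (using normality of $G_\infty$ to move it past $b$ and then absorbing it via (ii)) is correct.

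However, your proposal is not a complete proof, and you say so yourself. The claim that $\cG(X^{eq})$ equals $AG_\infty$ is not accurate as stated: the structure group of the maximal equicontinuous factor is governed not by $G_\infty$ but by a different $\tau$-closed normal subgroup (in the EGS framework, the one arising from the regionally proximal relation on the universal minimal flow), and $G_\infty$ enters only because PI-type conditions like (ii) let you control that larger subgroup. Your last paragraph correctly flags this as the crux, but it then stops short of resolving it. So what you have is a plausible sketch with the genuine difficulty identified but left open; to turn it into a proof you would need to go back to \cite{egs76} (or \cite{ek71} for the $K(u)$/$Q = EQ$ machinery) and pin down exactly which subgroup controls $\cG(X^{eq})$, and why condition~(i) on just one side suffices.
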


The algebra $\mathscr{K}$ and the notion of $K(u)$-ambits can, of course, be defined for arbitrary topological groups $T$. We can always replace a topological phase group $T$ with its discretised version $T_d$, and any $v$-ambit or a $K(v)$-ambit for some minimal idempotent $v \in T^{\LC}$ can be regarded as a $u$-ambit or a $K(u)$-ambit, respectively, when we pick a minimal idempotent $u \in \beta T_d$ so that $\pi(u) = v$ for the canonical homomorphism $\pi \colon \beta T_d \to T^{\LC}$. In addition, it is not too difficult to prove the analogue of Theorem~\ref{sec:egs} for a general topological group $T$ as a corollary to the discrete version. Condition (ii) in the topological case implies an analogous statement in the discrete setting; the only part requiring some thought is the verification of the fact that the group $G_\infty$ in $T^{\LC}$ contains the $\pi$-image of its counterpart in $\beta T_d$. The argument involves a transfinite induction, and it also relies on the observation that the restriction $\pi$ to $u \beta T_d u$ is a closed, continuous group homomorphism onto $v T^{\LC} v$ with respect to the appropriate $\tau$-topologies.

The conditions of the theorem above are satisfied if $T$ is abelian and one of the flows is $PI$, or if one of the base points is distal. In order to find a similar condition for multiple disjointness, we must characterise the maximal equicontinuous factor of a product flow (Theorem~\ref{sec:prodeq}). Two simple lemmas are needed.

\begin{lem}
\label{sec:minrel}
Let $(T,X)$ be a flow, and let $R$ be a reflexive, symmetric, invariant relation on $X$. For each ordinal $\alpha$, define a relation $R_\alpha$ on $X$ by transfinite recursion as follows:
\begin{itemize}
\item[(1)] Define $R_0 = R$.
\item[(2)] If $R_\alpha$ has been defined for some ordinal $\alpha$, put $R_{\alpha+1} = R_\alpha \circ R_\alpha$.
\item[(3)] If $\beta$ is a limit ordinal and $R_\alpha$ has been defined for each $\alpha < \beta$, put $R_\beta = \ov{\bigcup_{\alpha < \beta} R_\alpha}$.
\end{itemize}
Then, there exists an ordinal $\theta$ for which $R_\alpha = R_\theta$ for all $\alpha \geq \theta$, and $R_\theta$ is the smallest factor relation on $X$ that contains $R$.
\end{lem}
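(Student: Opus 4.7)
The plan is to verify by a uniform transfinite induction that every $R_\alpha$ is reflexive, symmetric, $T$-invariant as a subset of the product flow $X\times X$, and contains $R$. Each of these properties is preserved by the three operations used in the recursion: composition of relations preserves reflexivity, symmetry, and invariance; arbitrary unions preserve them as well; and closure in $X\times X$ preserves them because the diagonal is closed, the flip $(x,y)\mapsto(y,x)$ is continuous, and each $t\in T$ acts continuously on $X\times X$.

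Next I would establish stabilization. Since each $R_\alpha$ is reflexive, $R_\alpha\subseteq R_\alpha\circ R_\alpha = R_{\alpha+1}$, and the limit stage is by construction nondecreasing, so $(R_\alpha)$ is a nondecreasing transfinite chain of subsets of $X\times X$. Such a chain cannot be strictly increasing along all ordinals, because the collection of subsets of $X\times X$ is a set. Hence there is a smallest ordinal $\theta$ with $R_\alpha = R_\theta$ for every $\alpha\geq\theta$.

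The crucial observation is that stabilization simultaneously supplies the two remaining properties of a factor relation. From $R_{\theta+1}=R_\theta$ one obtains $R_\theta\circ R_\theta = R_\theta$, that is, $R_\theta$ is transitive. Choosing any limit ordinal $\lambda>\theta$, the defining recursion and the stabilization yield
\[
R_\theta = R_\lambda = \overline{\bigcup_{\alpha<\lambda} R_\alpha} = \overline{R_\theta},
\]
so $R_\theta$ is closed. Together with the properties inherited from the induction, $R_\theta$ is a closed, $T$-invariant equivalence relation containing $R$, i.e.\ a factor relation.

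For minimality, let $S$ be any factor relation on $X$ with $R\subseteq S$. I would show $R_\alpha\subseteq S$ for every $\alpha$ by a second transfinite induction: the base case $R_0=R\subseteq S$ is the hypothesis; at a successor $R_{\alpha+1}=R_\alpha\circ R_\alpha\subseteq S\circ S\subseteq S$ by transitivity of $S$; at a limit $R_\beta=\overline{\bigcup_{\alpha<\beta} R_\alpha}\subseteq\overline{S}=S$ by closedness of $S$. The only delicate point is the interplay between the successor and limit stages: the successor operation forces transitivity only after stabilization, while the limit operation is responsible for closedness. Neither alone produces a factor relation at a general stage, but once the chain stabilizes at $\theta$ and one evaluates the recursion at the next limit ordinal above $\theta$, both properties emerge at the same time.
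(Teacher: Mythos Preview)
Your proposal is correct and follows essentially the same approach as the paper: a transfinite induction to verify that each $R_\alpha$ is reflexive, symmetric, invariant, and sandwiched between $R$ and any factor relation containing $R$, together with the observation that the increasing chain must stabilize, yielding transitivity from the successor step and closedness from the limit step. The only cosmetic difference is that the paper folds the upper bound $R_\alpha\subseteq R'$ (where $R'$ is the intersection of all factor relations containing $R$) into the same induction as the other properties, whereas you run a separate second induction for minimality; both arrangements work equally well.
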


\begin{proof}
Let $R^{\prime}$ be the smallest factor relation on $X$ that contains $R$. It is obtained as the intersection of the collection of all factor relations containing $R$, one of which is $X \times X$. A straightforward argument by transfinite induction shows that, for any ordinal $\alpha$, the relation $R_\alpha$ is reflexive, symmetric, invariant, and $R \subseteq R_\alpha \subseteq R^{\prime}$. The transfinite sequence is increasing and therefore eventually constant, say $R_\alpha = R_\theta$ for all $\alpha \geq \theta$. It follows that $R_\theta = R_\theta \circ R_\theta$, i.e., this relation is transitive. Also, $R_\theta$ must be closed by the limit ordinal step. In conclusion, $R_\theta$ is a factor relation with $R \subseteq R_\theta \subseteq R^{\prime}$. We infer that $R_\theta = R^{\prime}$.
\end{proof}

Suppose that $\{X_i\}_{i \in I}$ is a collection of sets, and suppose that $\cQ = \{Q_i\}_{i \in I}$ is a collection of relations such that $Q_i \subseteq X_i \times X_i$ for each $i \in I$. Put $X = \prod_{i \in I}$. We define their product as a relation on $X$ by
	\[\bigotimes \cQ = \bigotimes_{i \in I} Q_i = \set{(x,x^{\prime}) \in X \times X}{(x_i,x^{\prime}_i) \in Q_i \text{ for all } i \in I}.
\]
Products of equivalence relations are again equivalence relations. Similarly, the product of factor relations is a factor relation on the product flow.

\begin{lem}
\label{sec:rellem}
Let $\{X_i\}_{i \in I}$ be a collection of topological spaces, and let $Q_i$ be a relation on $X_i$ for each $i \in I$. Define $X = \prod_{i \in I} X_i$ with the product topology. For every set $J \subseteq I$, let $\cQ(J) = \{Q_i(J)\}_{i \in I}$ be the collection of relations defined by $Q_i(J) = Q_i$ when $i \in J$ and $Q_i(J) = \Delta_{X_i}$ otherwise. Let $R$ be a closed, transitive relation on $X$ containing each $\bigotimes \cQ(\{i\})$, $i \in I$. Then, $R$ also contains the relation $\bigotimes \cQ(I)$.
\end{lem}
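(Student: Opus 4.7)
The plan is to take an arbitrary pair $(x,x') \in \bigotimes \cQ(I)$ and build it up from pairs in the single-coordinate relations $\bigotimes \cQ(\{i\})$, using transitivity for finitely many coordinates and closedness to pass from finite subsets of $I$ to all of $I$.

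First, for each finite subset $F = \{i_1, \ldots, i_n\} \subseteq I$, I would define a hybrid point $x^F \in X$ by $x^F_i = x'_i$ if $i \in F$ and $x^F_i = x_i$ otherwise. Interpolating one coordinate at a time along the enumeration of $F$, I obtain a chain $x = y^0, y^1, \ldots, y^n = x^F$, where $y^k$ agrees with $x'$ on $\{i_1, \ldots, i_k\}$ and with $x$ elsewhere. Consecutive points in this chain differ only in the single coordinate $i_k$, and the $i_k$-th components are $x_{i_k}$ and $x'_{i_k}$, which by assumption lie in $Q_{i_k}$. Hence $(y^{k-1}, y^k) \in \bigotimes \cQ(\{i_k\}) \subseteq R$ for each $k$, and transitivity of $R$ gives $(x, x^F) \in R$.

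Next, I would observe that as $F$ ranges over the directed set of finite subsets of $I$ ordered by inclusion, the net $(x^F)_F$ converges to $x'$ in the product topology: any basic neighbourhood of $x'$ is determined by finitely many coordinates $F_0$, and for every $F \supseteq F_0$ the point $x^F$ agrees with $x'$ on $F_0$. Therefore the net $(x, x^F)$ converges to $(x, x')$ in $X \times X$. Since each $(x, x^F)$ lies in the closed set $R$, the limit $(x, x')$ also lies in $R$, proving $\bigotimes \cQ(I) \subseteq R$.

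No step seems to present real difficulty; the only thing to be careful about is that the hypothesis only provides single-coordinate changes, so the entire argument hinges on combining the finitary transitive closure step with the topological approximation step — neither can be skipped in the infinite-index case. The hypothesis that each $Q_i(J)$ for $i \notin J$ is the diagonal is exactly what makes the interpolation chain well-defined.
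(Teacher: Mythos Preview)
Your proof is correct and follows essentially the same approach as the paper's: change coordinates one at a time using transitivity to handle finite subsets, then pass to the full index set via a net argument and closedness of $R$. The only cosmetic differences are that the paper states the finite case as a separate induction (showing $\bigotimes \cQ(J) \subseteq R$ for all finite $J$) rather than unrolling the chain directly, and that the paper's net approximates $x$ while fixing $x'$ whereas yours does the reverse; neither difference is substantive.
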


\begin{proof}
The first step is to show that $\bigotimes \cQ(J) \subseteq R$ for all (non-empty) finite sets $J \subseteq I$, and this is done by induction on the cardinality of $J$. By assumption, the claim holds when $|J| = 1$. Suppose that $\bigotimes \cQ(K) \subseteq R$ for any set $K \subseteq I$ of cardinality $n \in \N$. Consider a set $J \subseteq I$ of cardinality $n+1$, so $J = K \cup \{j\}$ for some $K \subseteq I$ of cardinality $n$ and for some $j \in I \setminus K$. If $(x,x^{\prime}) \in \bigotimes \cQ(J)$, then for a suitably chosen $y \in X$, we get $(x,y) \in \bigotimes \cQ(\{j\})$ and $(y,x^{\prime}) \in \bigotimes \cQ(K)$, so $(x,x^{\prime}) \in R$ by the assumptions.

To prove the general case, consider an arbitrary pair $(x,x^{\prime}) \in \bigotimes \cQ(I)$. Let $F$ be the collection of non-empty finite subsets of $I$, ordered by inclusion. For each $J \in F$, define $x_J \in X$ so that $(x_J)_j = x_j$ for all $j \in J$ and $(x_J)_i = x^{\prime}_i$ for $i \in I \setminus J$. Then, $(x_J,x^{\prime}) \in \bigotimes \cQ(J) \subseteq R$ for every $J \in F$, and the net $(x_J)$ converges to $x$. Since $R$ is closed, we have $(x,x^{\prime}) \in R$.
\end{proof}

\begin{thm}
\label{sec:prodeq}
Let $\{X_i\}_{i \in I}$ be a collection of flows with a common phase group $T$, and let $X = \prod_{i \in I} X_i$. Then,
	\[EQ(X) = \bigotimes_{i \in I} EQ(X_i).
\]
In other words, the maximal equicontinuous factor $X^{eq}$ of $X$ is isomorphic to the product flow $\prod_{i \in I} X_i^{eq}$.
\end{thm}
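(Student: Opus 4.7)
\medskip

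The plan is to establish the two inclusions separately. For the easy inclusion $EQ(X) \subseteq \bigotimes_{i \in I} EQ(X_i)$, observe that the product of the quotient maps $X_i \to X_i^{eq}$ yields a surjective homomorphism $\Pi \colon X \to \prod_{i \in I} X_i^{eq}$ of $T$-flows. The codomain is equicontinuous, since an arbitrary product of equicontinuous flows is equicontinuous (the enveloping semigroup of the product embeds naturally into the product of the enveloping semigroups, each of which is a topological group of homeomorphisms acting equicontinuously on its coordinate). Hence $R(\Pi) = \bigotimes_{i \in I} EQ(X_i)$ is a factor relation on $X$ inducing an equicontinuous factor, and therefore contains $EQ(X)$.

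\medskip

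For the opposite inclusion, I would invoke Lemma~\ref{sec:rellem} with $R = EQ(X)$, which is closed and transitive as a factor relation. With $\cQ = \{EQ(X_i)\}_{i \in I}$, it suffices to prove the \emph{one-coordinate statement}: for every $i \in I$, every pair $(x,x^{\prime}) \in X \times X$ with $(x_i,x^{\prime}_i) \in EQ(X_i)$ and $x_j = x^{\prime}_j$ for all $j \neq i$ belongs to $EQ(X)$. Once this is established for each $i$, Lemma~\ref{sec:rellem} immediately yields $\bigotimes_{i \in I} EQ(X_i) \subseteq EQ(X)$.

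\medskip

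The one-coordinate statement itself would be proved by transfinite induction along the construction in Lemma~\ref{sec:minrel}, starting from $R_0 = Q(X_i)$ and stopping at $R_\theta = EQ(X_i)$. Let $S_\alpha = \set{(x,x^{\prime}) \in X \times X}{(x_i,x^{\prime}_i) \in R_\alpha \text{ and } x_j = x^{\prime}_j \text{ for } j \neq i}$; I would show $S_\alpha \subseteq EQ(X)$ by induction on $\alpha$. The successor step is trivial from transitivity of $EQ(X)$: a factorisation $(x_i,x^{\prime}_i) = (x_i,y_i) \circ (y_i,x^{\prime}_i)$ in $R_\alpha$ lifts to $X$ by choosing $y \in X$ agreeing with $x$ (and $x^{\prime}$) off the $i$-th coordinate. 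The limit step follows from closedness of $EQ(X)$ by approximating $(x_i,x^{\prime}_i)$ by a net from $\bigcup_{\alpha < \beta} R_\alpha$ and lifting each net element to $X$ with the same off-$i$ coordinates as $x$.

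\medskip

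The main obstacle, and the only step that uses more than the formal apparatus of Lemmas~\ref{sec:minrel} and \ref{sec:rellem}, is the base case $S_0 \subseteq Q(X) \subseteq EQ(X)$. Given a basic entourage $U \supseteq \prod_{k \in I} V_k$ of $\Delta_X$ (with $V_k = X_k \times X_k$ except for finitely many $k$, one of which is $i$) and a net $(t_\delta, (a_\delta,b_\delta)) \in T \times V_i$ with $t_\delta(a_\delta,b_\delta) \to (x_i,x^{\prime}_i)$, I would construct lifts $y_\delta, z_\delta \in X$ whose $i$-th coordinates are $a_\delta, b_\delta$ and whose $k$-th coordinates for $k \neq i$ are both $t_\delta^{-1} x_k$. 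Then $(y_\delta, z_\delta) \in U$ and $t_\delta(y_\delta, z_\delta) \to (x, x^{\prime})$, placing $(x,x^{\prime})$ in $\ov{TU}$. This is the one place where the group structure of $T$ is essential, as the construction relies on inverting each $t_\delta$; once it is in place, the rest is formal.
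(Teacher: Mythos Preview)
Your proposal is correct and follows essentially the same approach as the paper: both directions are handled identically, with the nontrivial inclusion obtained by first establishing the one-coordinate base case $\bigotimes \cQ(\{i\}) \subseteq Q(X)$ via the net characterisation (your explicit $t_\delta^{-1}$ construction is precisely what the paper's citation of the net characterisation unpacks to), then running the transfinite construction of Lemma~\ref{sec:minrel} to pass from $Q(X_i)$ to $EQ(X_i)$ coordinatewise, and finally applying Lemma~\ref{sec:rellem}. The only cosmetic difference is that the paper phrases the transfinite step as identifying $\bigotimes \mathcal{EQ}(\{j\})$ with the smallest factor relation containing $\bigotimes \cQ(\{j\})$, whereas you verify $S_\alpha \subseteq EQ(X)$ directly at each stage; these are equivalent formulations of the same induction.
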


\begin{proof}
The product flow $\prod_{i \in I} X_i^{eq}$ is equicontinuous and clearly isomorphic to $X/\bigotimes_{i \in I} EQ(X_i)$, so $EQ(X) \subseteq \bigotimes_{i \in I} EQ(X_i)$.

For all subsets $J \subseteq I$, define a collection $\cQ(J) = \{Q_i(J)\}_{i \in I}$ of relations by putting $Q_j(J) = Q(X_j)$ when $j \in J$ and $Q_i(J) = \Delta_{X_i}$ otherwise. Similarly, for $J \subseteq I$, define $\mathcal{EQ}(J) = \{EQ_i(J)\}_{i \in I}$ by $EQ_j(J) = EQ(X_j)$ for all $j \in J$ and $EQ_i(J) = \Delta_{X_i}$ otherwise. We claim that, for all $j \in I$, the relation $\bigotimes \mathcal{EQ}(\{j\})$ is the smallest factor relation on $X$ containing $\bigotimes \cQ(\{j\})$. This seems intuitive because of the characterisation of the equicontinuous structure relation as the smallest factor relation containing the regionally proximal relation (for any flow). We need Lemma~\ref{sec:minrel} for a rigorous argument. Fix $j \in I$. Define a transfinite sequence $(Q_\alpha(X_j))$ of relations on $X_j$, indexed by ordinals $\alpha$, in the manner of Lemma~\ref{sec:minrel}, starting with $Q_0(X_j) = Q(X_j)$. We know that this sequence ultimately reaches $EQ(X_j)$. For each ordinal $\alpha$, define a collection $\cQ_\alpha(\{j\}) = \{Q_{\alpha,i}(\{j\})\}_{i \in I}$ by $Q_{\alpha,j}(\{j\}) = Q_\alpha(X_j)$ and $Q_{\alpha,i}(\{j\}) = \Delta_{X_i}$ otherwise. Defining $R = \bigotimes \cQ_0(\{j\})$ and by applying the transfinite construction of Lemma~\ref{sec:minrel} to this relation, we obtain another transfinite sequence $(R_\alpha)$. It is not difficult to verify that $R_\alpha = \bigotimes \cQ_\alpha(\{j\})$ for each ordinal $\alpha$ by using transfinite induction. Therefore, the sequence $(R_\alpha)$ reaches the factor relation $\bigotimes \mathcal{EQ}(\{j\})$, from which the desired auxiliary claim follows.

Observe that $\bigotimes \cQ(\{i\}) \subseteq Q(X)$ for all $i \in I$ (use the net characterisation of the regionally proximal relation, see \cite{devries}, Q.3 on page 397). Therefore also $\bigotimes \cQ(\{i\}) \subseteq EQ(X)$ and $\bigotimes \mathcal{EQ}(\{i\}) \subseteq EQ(X)$ for all $i \in I$. Using Lemma~\ref{sec:rellem}, we infer that $\bigotimes_{i \in I} EQ(X_i) = \bigotimes \mathcal{EQ}(I) \subseteq EQ(X)$, as required.
\end{proof}

Similarly, the maximal distal factor of the product $\prod_{i \in I} X_i$ is isomorphic to the product of the respective maximal distal factors. The arguments are essentially the same as above with the exception that the regionally proximal relations are replaced with proximal relations.

Next, we focus on the problem of characterising multiple disjointness for minimal equicontinuous flows with an abelian phase group (Theorem~\ref{sec:indperp}).

Let $G$ be an abelian group with identity $e$, and let $\cH = \{H_i\}_{i \in I}$ be a collection of subgroups of $G$. We say that $\cH$ is \emph{independent} if, whenever $J \subseteq I$ is non-empty and finite and $\prod_{j \in J} h_j = e$ for some $h_j \in H_j$, $j \in J$, we must have $h_j = e$ for all $j \in J$. It is clear that a collection of subgroups is independent if and only if all non-empty, finite subcollections are independent.

Recall that, when $T$ is an abelian topological group, any minimal equicontinuous $T$-flow arises from a topological group compactification $(\phi,X)$ of $T$ (\cite{devries}, Corollary 3.42 on page 317); $T$ acts on $X$ by $tx = \phi(t)x$, $t \in T$, $x \in X$. We denote the character group of $T$ by $\wh{T}$.

\begin{thm}
\label{sec:indperp}
Let $T$ be an abelian topological group, and let $\{(\phi_i,X_i)\}_{i \in I}$ be a collection of topological group compactifications of $T$. For each $i \in I$, let $A_i = \phi_i^{\ast}(\wh{X_i})$. Then, the family $\{X_i\}_{i \in I}$ of $T$-flows is multiply disjoint if and only if the family $\{A_i\}_{i \in I}$ of subgroups of $\wh{T}$ is independent.
\end{thm}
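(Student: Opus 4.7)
The plan is to reduce the statement to a Pontryagin duality calculation on compact abelian groups. Since $T$ is abelian and each $\phi_i(T)$ is dense in $X_i$, each $X_i$ is itself a compact Hausdorff abelian topological group. For a $T$-flow on a compact abelian group $Y$ given by an action $ty = \psi(t) y$ with $\psi : T \to Y$ a continuous homomorphism, the orbit of $y \in Y$ is the translate $\psi(T) y$, so the flow is minimal iff $\psi(T)$ is dense in $Y$. Applying this to the diagonal homomorphism $\phi_I : T \to \prod_{i \in I} X_i$ defined by $\phi_I(t) = (\phi_i(t))_{i \in I}$, multiple disjointness of $\{X_i\}$ is equivalent to density of $\phi_I(T)$ in the compact abelian group $\prod X_i$.

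The next step is to detect this density by characters. By Pontryagin duality, a subgroup $H$ of a compact Hausdorff abelian group $Y$ is dense iff every $\chi \in \wh{Y}$ vanishing on $H$ is trivial. The Pontryagin dual of a product of compact abelian groups is the algebraic direct sum of their duals, so any character of $\prod X_i$ has the form $\chi(x) = \prod_{i \in J} \chi_i(x_i)$ for some finite $J \subseteq I$ and $\chi_i \in \wh{X_i}$. Such a $\chi$ annihilates $\phi_I(T)$ exactly when $\prod_{i \in J} \phi_i^{\ast}(\chi_i) = 1$ in $\wh{T}$.

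The final step is to match this with the independence condition. Density of $\phi_i(T)$ in $X_i$ shows that $\phi_i^{\ast}$ is injective (a character trivial on a dense subgroup is trivial), so $\phi_i^{\ast} : \wh{X_i} \to A_i$ is a group isomorphism. Writing $a_i = \phi_i^{\ast}(\chi_i) \in A_i$, the condition that every annihilating character of $\prod X_i$ is trivial translates to: whenever $\prod_{i \in J} a_i = 1$ with $J \subseteq I$ finite and $a_i \in A_i$, we must have $a_i = 1$ for each $i \in J$. This is precisely the definition of independence of $\{A_i\}_{i \in I}$, so both implications are established simultaneously. The only step demanding care is the identification of $\wh{\prod X_i}$ with $\bigoplus_i \wh{X_i}$, which is a standard fact from abstract harmonic analysis; the rest is a direct translation between duality pairings.
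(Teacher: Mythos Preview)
Your proof is correct and follows essentially the same route as the paper: both arguments identify minimality of $\prod_i X_i$ with density of the diagonal image $\phi_I(T)$, detect that density via characters of the compact abelian product, and use injectivity of each $\phi_i^{\ast}$ to translate the annihilator condition into the independence condition on the subgroups $A_i$. The only cosmetic difference is that the paper splits the two implications and evaluates characters at explicit points, whereas you package the whole thing as a single biconditional chain through the standard Pontryagin fact that a subgroup of a compact abelian group is dense iff its annihilator in the dual is trivial.
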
 

\begin{proof}
We may assume that $I$ is finite since the general case reduces to this one.

Suppose first that $\perp \{X_i\}_{i \in I}$. Let $X = \prod_{i \in I} X_i$, and let $\chi_i \in \wh{X_i}$, $i \in I$, be such that $\prod_{i \in I} \phi_i^{\ast} \chi_i = 1$. Define a continuous function $\chi \colon X \to \T$ by $\chi(x) = \prod_{i \in I} \chi_i(x_i)$ for all $x \in X$. Let $e_i$ be the identity of $X_i$ for each $i \in I$, so $e = (e_i)_{i \in I}$ is the identity of the product group $X$. Now, for any $t \in T$,
	\[\chi(te) = \prod_{i \in I} \chi_i(te_i) = \prod_{i \in I} \chi_i(\phi_i(t)) = 1.
\]
Since $X$ is minimal, $\chi = 1$. Thus, for any $i \in I$ and $x \in X$ with $x_j = e_j$ for $j \in I \setminus \{i\}$, we get $\chi_i(x_i) = \chi(x) = 1$. This shows that $\chi_i = 1$ for all $i \in I$. Consequently, $\phi_i^{\ast} \chi_i = 1$ for all $i \in I$, and the collection $\{A_i\}_{i \in I}$ is independent.

Suppose then that $\{X_i\}_{i \in I}$ is not multiply disjoint, so $X$ is not minimal. Let $\phi \colon T \to X$ be the continuous homomorphism $\phi(t)_i = \phi_i(t)$, $i \in I, t \in T$, and let $Y = \ov{\phi(T)}$, both a closed subgroup of $X$ and the orbit closure of the identity. Since $X$ is distal, all orbit closures are minimal sets, so $Y$ is a proper subset of $X$. Pick an arbitrary $z \in X \setminus Y$. We can find a character $\chi \in \wh{X}$ so that $\chi(y) = 1$ for all $y \in Y$ and $\chi(z) \neq 1$. It must be of the form $\chi(x) = \prod_{i \in I}\chi_i(x_i)$ for all $x \in X$ for some $\chi_i \in \wh{X_i}$, $i \in I$. Clearly, at least some of the characters $\chi_i$ must be non-trivial, so the corresponding characters $\phi_i^{\ast} \chi_i$ of $T$ are also non-trivial. On the other hand, $\prod_{i \in I} \phi_i^{\ast} \chi_i(t) = \chi(\phi(t)) = 1$ for every $t \in T$, so the collection $\{A_i\}_{i \in I}$ is not independent.
\end{proof}

This theorem can be used to obtain the well-known characterisations of multiple disjointness of finite collections of continuous rotations and discrete irrational rotations of the circle (\cite{devries}, 1.14 on page 157), and these results are easily generalised to infinite collections.

Finally, we can state the main theorem of this section. Note that we assume the phase group to be topological as opposed to discrete, so when we invoke Theorem~\ref{sec:egs}, we are actually referring to the version with topological $T$. But as we have noted, the two versions are equivalent.

\begin{thm}
\label{sec:main}
Let $T$ be a topological group, let $u \in T^{\LC}$ be a minimal idempotent, and let $\cX = \{(X_i,x_i)\}_{i \in I}$ be a collection of $K(u)$-ambits with phase group $T$ such that each $X_i$ is a $PI$-flow. Let $\cX^{eq} = \{X_i^{eq}\}_{i \in I}$, and suppose that $\perp \cX^{eq}$. Then, $\perp \cX$.
\end{thm}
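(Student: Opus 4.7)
The plan is to reduce the multiple disjointness assertion to the two-flow disjointness theorem (Theorem~\ref{sec:egs}) via a finite induction, using Theorem~\ref{sec:prodeq} to keep track of the maximal equicontinuous factor of a partial product. By the standard product topology remark, multiple disjointness of $\cX$ follows from multiple disjointness of every finite subcollection, so I restrict to the case $|I| = n < \infty$ and induct on $n$. The base case $n = 1$ is trivial, and the inductive step is where the argument happens.

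Assume $Y := \prod_{i=1}^{n-1} X_i$ is minimal, and set $y = (x_1, \ldots, x_{n-1}) \in Y$. I want to show $Y \perp X_n$, for which I verify the three hypotheses of Theorem~\ref{sec:egs} applied to the $u$-ambits $(Y,y)$ and $(X_n, x_n)$. First, $(Y,y)$ is itself a $u$-ambit: since $T^{\LC}$ acts on a product of $T$-flows coordinatewise (the canonical map $T^{\LC} \to E(Y)$ factors through each $T^{\LC} \to E(X_i)$), and since each $(X_i,x_i)$ is a $K(u)$-ambit, hence a $u$-ambit, we have $uy = (ux_i)_i = (x_i)_i = y$. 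The same coordinatewise argument shows $utuy = (utux_i)_i = (tx_i)_i = ty$ for all $t \in T$, so $y$ is a $K(u)$-point. This gives condition (i) of Theorem~\ref{sec:egs}.

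For condition (ii), the coordinatewise action identifies $\cG(Y,y) = \bigcap_{i=1}^{n-1} \cG(X_i, x_i)$. Since each $X_i$ is a $PI$-flow, the algebraic characterisation of $PI$-ambits cited in the preliminaries yields $G_\infty \subseteq \cG(X_i, x_i)$ for every $i$, so $G_\infty \subseteq \cG(Y,y) \subseteq \cG(Y,y)\cG(X_n, x_n)$. Note that I do \emph{not} need $Y$ itself to be $PI$, only this containment of structure groups, which comes for free. For condition (iii), Theorem~\ref{sec:prodeq} identifies $Y^{eq}$ with $\prod_{i=1}^{n-1} X_i^{eq}$, and the hypothesis $\perp \cX^{eq}$ implies multiple disjointness of any finite subcollection of $\cX^{eq}$, so in particular $\prod_{i=1}^{n-1} X_i^{eq}$ is minimal and is disjoint from $X_n^{eq}$ (since $\prod_{i=1}^{n} X_i^{eq}$ is minimal). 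Thus $Y^{eq} \perp X_n^{eq}$.

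Theorem~\ref{sec:egs} now yields $Y \perp X_n$, which is exactly the minimality of $\prod_{i=1}^n X_i$, closing the induction. The main conceptual point, and what I expect to be the subtlest to spell out cleanly, is the coordinatewise nature of the $T^{\LC}$-action on the product, which is what makes the $u$-ambit and $K(u)$-ambit properties, and the intersection formula for structure groups, pass through products; everything else is bookkeeping on top of Theorems~\ref{sec:egs} and \ref{sec:prodeq}.
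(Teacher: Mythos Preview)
Your proposal is correct and follows essentially the same approach as the paper: reduce to finite $I$, induct on $|I|$, and at the inductive step split off one factor, use Theorem~\ref{sec:prodeq} to identify the maximal equicontinuous factor of the partial product, verify the $K(u)$-point property and the structure group containment $G_\infty \subseteq \cG(Y,y)$ coordinatewise, and then invoke Theorem~\ref{sec:egs}. Your observation that one only needs $G_\infty \subseteq \cG(Y,y)\cG(X_n,x_n)$ rather than $Y$ being $PI$ is accurate; the paper phrases the same containment as ``so $X$ is $PI$'' via the algebraic characterisation, but the content is identical.
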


\begin{proof}
Again, we prove the theorem for finite index sets $I$.

The case $|I| = 1$ is trivial. Suppose that the claim of the theorem holds whenever $|I| = n$ for some $n \in \N$. Consider a collection $\cX = \{(X_i,x_i)\}_{i \in I}$ of $K(u)$-ambits with phase group $T$ such that each is $PI$, $\perp \cX^{eq}$, and $|I| = n+1$. Pick $k \in I$, and define $J = I \setminus \{k\}$ and $X = \prod_{j \in J} X_j$. By assumption, $X$ is minimal, and its maximal equicontinuous factor $X^{eq}$ is isomorphic to $\prod_{j \in J} X_j^{eq}$ by Theorem~\ref{sec:prodeq}. Moreover, the point $x = (x_j)_{j \in J}$ is a $K(u)$-point, and $G_\infty \subseteq \bigcap_{j \in J} \cG(X_j,x_j) = \cG(X,x)$, so $X$ is $PI$. We can apply Theorem~\ref{sec:egs} to conclude that $X \perp X_k$, completing the argument.
\end{proof}

We shall now turn our attention to $\R$-flows to prove a specialised disjointness theorem. For the sake of clarity, we denote such a flow by $(X,\sigma)$, $X$ being the phase space and $\sigma \colon \R \times X \to X$ the action. The maximal equicontinuous factor of an $\R$-flow $F = (X,\sigma)$ is denoted by $F^{eq}$ instead of just $X^{eq}$. For any $\R$-flow $F = (X,\sigma)$ and for any $a \in \R$, $a > 0$, we define a new $\R$-flow $F_a = (X,a\sigma)$ by keeping the same phase space and defining a new action $a \sigma(t,x) = \sigma(at,x)$ for all $t \in \R$ and $x \in X$. This manipulation retains all the essential dynamical features such as orbits and invariant measures, as it simply alters the `speed' by which the phase group acts on $X$. Hence, $F_a$ is distal, equicontinuous, minimal or uniquely ergodic for some $a > 0$ if and only if $F$ has the same property. If $F^{\prime} = (X^{\prime},\sigma^{\prime})$ is another $\R$-flow and if $\pi \colon X \to X^{\prime}$ is a homomorphism, then it is also a homomorphism from $F_a$ to $G_a$ for any $a > 0$, and the dynamical properties of $\pi$ such as equicontinuity and proximality are not affected by the change of actions. Consequently, it is easy to see that $F_a$ is $PI$ for any $a > 0$ if $F$ is $PI$. Another point worth noting is that the enveloping semigroups of $F = (X,\sigma)$ and $F_a$ are identical for any $a > 0$. Also, the regionally proximal relations $Q(F)$ and $Q(F_a)$ are identical subsets of $X \times X$. It follows that $EQ(F) = EQ(F_a)$, and we may write $(F_a)^{eq} = (F^{eq})_a = F_a^{eq}$ with no ambiguity.

We can now build a large multiply disjoint collection of metric, minimal, $PI$ $\R$-flows out of a single one:

\begin{thm}
\label{sec:perpreparam}
Let $F = (X,\sigma)$ be a metric, minimal, $PI$ $\R$-flow. Then, there exists an uncountable set $A \subseteq \R$ such that $\perp \{F_a\}_{a \in A}$.
\end{thm}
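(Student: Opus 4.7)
The plan is to reduce the assertion to multiple disjointness of the maximal equicontinuous factors via Theorem~\ref{sec:main}, then use the character-theoretic criterion of Theorem~\ref{sec:indperp}, and finally to construct $A$ by a Hamel-basis argument over a suitable countable subfield of $\R$.

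First, I would check that the hypotheses of Theorem~\ref{sec:main} can be arranged. Fix any minimal idempotent $u \in \R^{\LC}$ and, for each $a > 0$, pick a base point $x_a \in uX$, turning $(F_a, x_a)$ into a $u$-ambit. Since $\R$ is abelian, every $u$-ambit is automatically a $K(u)$-ambit. The remark preceding the theorem statement notes that each $F_a$ inherits the $PI$ property from $F$. Hence, to prove the theorem it suffices to produce an uncountable set $A \subseteq \R_{>0}$ such that $\perp \{F_a^{eq}\}_{a \in A}$.

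Second, I would describe $F^{eq}$ explicitly. Because $F$ is metric, the continuous image $F^{eq}$ is separable; being a compact abelian topological group, it is therefore metrizable, so its dual is countable. Write $F^{eq}$ as arising from a topological group compactification $\phi \colon \R \to G$, and identify $\wh{\R}$ with $\R$ via $s \mapsto e^{i 2\pi s t}$. The flow $F_a^{eq}$ then corresponds to the compactification $(a\phi, G)$ where $(a\phi)(t) = \phi(at)$, and a one-line calculation gives $(a\phi)^{\ast}(\wh{G}) = aH$, where $H := \phi^{\ast}(\wh{G})$ is a countable subgroup of $\R$. By Theorem~\ref{sec:indperp}, the disjointness $\perp \{F_a^{eq}\}_{a \in A}$ is equivalent to the independence of the family $\{aH\}_{a \in A}$ of subgroups of $\R$.

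Finally, I would construct $A$ by linear algebra. Let $K = \Q(H) \subseteq \R$, a countable subfield. Since $\R$ is a $K$-vector space of uncountable dimension, there is an uncountable $K$-linearly independent subset of $\R$; by negating elements and restricting to one sign (which preserves $K$-linear independence) I obtain such a set $A \subseteq \R_{>0}$. To verify independence of $\{aH\}_{a \in A}$: given distinct $a_1, \ldots, a_n \in A$ and elements $h_i = a_i s_i \in a_i H$ with $\sum_i h_i = 0$, the scalars $s_i$ lie in $H \subseteq K$, so the $K$-linear independence of $a_1, \ldots, a_n$ forces each $s_i = 0$, hence each $h_i = 0$.

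The main technical step is the first reduction, which requires verifying the $K(u)$-ambit and $PI$ hypotheses needed to invoke Theorem~\ref{sec:main}; the character computation for the reparametrization and the Hamel-basis construction are essentially mechanical once countability of $\wh{G}$ is in hand.
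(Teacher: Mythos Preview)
Your overall strategy matches the paper's: reduce to the maximal equicontinuous factors via Theorem~\ref{sec:main} (using that $\R$ is abelian so $u$-ambits are $K(u)$-ambits, and that the $PI$ property survives reparametrisation), then invoke Theorem~\ref{sec:indperp} to reduce to independence of the subgroups $aH\subseteq\R$, where $H$ is the countable image of $\wh{G}$ in $\wh{\R}\cong\R$.

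The one genuine difference is the construction of $A$. The paper uses Zorn's lemma to produce a maximal $A$ with $\{aH\}_{a\in A}$ independent and then argues by contradiction that a countable $A$ could be enlarged, via the countable set $B=\{\gamma/g:\gamma\in\langle AH\rangle,\ g\in H\setminus\{0\}\}$. Your route---pass to the countable subfield $K=\Q(H)$ and take an uncountable $K$-linearly independent set in $\R$---is more conceptual and slightly cleaner: it avoids the maximality argument and the case split on $g=0$, since $K$-linear independence handles everything at once. Both arguments ultimately exploit the same cardinality gap (countable $H$ versus uncountable $\R$).

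One small slip to fix: your claim that a separable compact abelian group is automatically metrizable is false (e.g.\ $\{0,1\}^{\cnt}$ is separable but not metrizable). The correct justification, which the paper uses, is that $F^{eq}$ is a Hausdorff quotient of the compact metric space $X$ and is therefore itself metrizable; countability of $\wh{G}$ then follows.
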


\begin{proof}
Put $E = E(F^{eq}) = E(F_a^{eq})$, $a > 0$, so $E$ is a topological group. Let $\psi_a \colon \R \to E$ denote the canonical continuous homomorphism from $\R$ into the enveloping semigroup of $F_a^{eq}$ for any $a > 0$, that is, $\psi_a(t) = \psi_1(at)$ for all $t \in \R$. Each $F_a$ can be turned into a $K(u)$-ambit for a fixed minimal idempotent $u \in \R^{\LC}$ by picking a base point $x_a$ from the phase space of $F_a$ such that $ux_a = x_a$. By Theorem~\ref{sec:main}, it suffices to find an uncountable set $A \subseteq \R$ such that $\perp \{F_a^{eq}\}_{a \in A}$. Since the flow on the enveloping semigroup of $F_a^{eq}$ is isomorphic to $F_a^{eq}$ for any $a > 0$ due to $\R$ being abelian, a collection $\{F_a^{eq}\}_{a \in A}$ for some non-empty $A \subseteq \R$ is multiply disjoint if and only the corresponding collection $\{E(F_a^{eq})\}_{a \in A}$ is multiply disjoint. By Theorem~\ref{sec:indperp}, the latter is equivalent to the independence of the subgroups $\psi_a^{\ast}(\wh{E})$ of $\wh{\R}$, $a \in A$. 

Since $X$ is metrisable, so is $X^{eq}$ and therefore also $E$, which is isomorphic to $F^{eq}$ as a flow. It follows that the character group $\wh{E}$ is countable (\cite{hofmann}, Theorem 8.45). Let $\Phi \colon \wh{\R} \to \R$ be the inverse of the topological isomorphism that maps $r \in \R$ to the character $x \mapsto e^{irx} : \R \to \T$. We define $G_a = \Phi(\psi_a^{\ast}(\wh{E}))$ for all $a > 0$, and we put $G = G_1$. Now, $G_a = aG$ for any $a > 0$. The group $G$ is countable. It remains to find an uncountable $A \subseteq (0,\infty)$ so that $\{aG\}_{a \in A}$ is independent.

Let $\cA$ be the (non-empty) family of all non-empty subsets of $(0,\infty)$ for which $\{aG\}_{a \in A}$ is independent. Inclusion provides a partial order on $\cA$. If $\{C_i\}_{i \in I} \subseteq \cA$ is a chain in $\cA$, put $C = \bigcup_{i \in I} C_i$. This is an upper bound for $\{C_i\}_{i \in I}$, and $\{cG\}_{c \in C}$ is easily seen to be independent. By Zorn's lemma, there is a maximal element $A \in \cA$. This set must be uncountable, which we prove by an argument by contradiction. If $A$ is countable, so are $AG$ and the subgroup $\Gamma \subseteq \R$ generated by $AG$. We define $B \subseteq \R$ as the set of all elements of the form $b = \gamma/g$ where $\gamma \in \Gamma$ and $g \in G \setminus \{0\}$. Again, $B$ is countable, so we can pick $a \in (0,\infty) \setminus B$. The set $\{a\} \cup A$ is now in $\cA$. To see this, suppose that $a_i \in A$ and $g,g_i \in G$ for $1 \leq i \leq n$, $n \in \N$, are such that $ag + \sum_{i = 1}^n a_i g_i = 0$. If $g = 0$, we also get $g_i = 0$ for all $1 \leq i \leq n$ from the independence of $\{a_iG\}_{i = 1}^n$. If $g \neq 0$, we get $a = \gamma/g \in B$ where $\gamma = -\sum_{i = 1}^n a_i g_i \in \Gamma$, contradicting the choice of $a$, so this case is not possible. Thus, we must have $ag = a_i g_i = 0$ for $1 \leq i \leq n$, showing that $\{a\} \cup A \in \cA$. But this contradicts the maximality of $A$. In conclusion, $A$ is uncountable.
\end{proof}

\section{Applications}

We can now use the multiple disjointness results to answer the original problem of finding the cardinalities of invariant means on the spaces $\cD(\Z)$ and $\cD(\R)$.

In \cite{furstenberg61}, Furstenberg gave an example of a minimal distal system on $\T^2$ with multiple invariant measures. It extends a particular irrational rotation of the circle, but as Kodaka pointed out in \cite{kodaka95}, an analogous system can be constructed for any irrational rotation angle. To be specific, given any $\alpha \in \R \setminus \Q$ and defining $a = \exx{\alpha} \in \T$, there is a continuous function $t_a \colon \T \to \T$ so that the homeomorphism $T_a \colon \T^2 \to \T^2$, $T_a(x,y) = (ax,t_a(x)y)$, $(x,y) \in \T^2$, defines a minimal distal system $(\T^2,T_a)$ that is not uniquely ergodic.

More generally, if the irrational rotation angle $\alpha$ is fixed and $t \colon \T \to \T$ is a continuous function, then we can consider a homeomorphism $T \colon \T^2 \to \T^2$ defined by $T(x,y) = (ax,t(x)y)$, $(x,y) \in \T^2$, and the minimality and unique ergodicity of the system $(\T^2,T)$ can be determined by considering the following functional equations:
\begin{equation}
\frac{f(ax)}{f(x)} = t(x)^m \text{ for all } x \in \T \label{eq:mineq}
\end{equation}
where $f \colon \T \to \T$ is a continuous function and $m \in \Z \setminus \{0\}$;
\begin{equation}
\frac{g(ax)}{g(x)} = t(x)^n \text{ for almost every } x \in \T  \label{eq:ueeq}
\end{equation}
where $g \colon \T \to \T$ is a Borel function and $n \in \Z \setminus \{0\}$. The space $\T$ is understood as a measure space with respect to its normalised Haar measure, which is the unique invariant measure for $(\T,\lambda_a)$. The system $(\T^2,T_a)$ is minimal if and only if \eqref{eq:mineq} has no solution $f$ and $m$, and the system is uniquely ergodic if and only if \eqref{eq:ueeq} has no solution $g$ and $n$. It can be shown that, if a solution $g$ exists for the second equation with $n = 1$ and if, for any $k \in \Z \setminus \{0\}$, the function $g^k$ does not agree almost everywhere with any continuous function, then the first equation has no solution $f$ and $m$. The function $t_a$ constructed in \cite{kodaka95} has precisely this property.

\begin{prop}
The maximal equicontinuous factor of $(\T^2,T_a)$ is $(\T,\lambda_a)$ for any $a = \exx{\alpha}$, $\alpha \in \R \setminus \Q$.
\end{prop}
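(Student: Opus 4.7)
The plan is to argue by contradiction, exploiting the vertical rotation symmetry of the skew product together with the Kodaka properties of $t_a$ recalled just before the proposition. First I would observe that the projection $\pi \colon \T^2 \to \T$, $\pi(x,y) = x$, is a homomorphism from $(\T^2, T_a)$ onto the equicontinuous system $(\T, \lambda_a)$, so $(\T, \lambda_a)$ is a factor of the maximal equicontinuous factor $(\T^2,T_a)^{eq}$. Assume that the induced map $(\T^2,T_a)^{eq} \to (\T, \lambda_a)$ is not an isomorphism. Since the maximal equicontinuous factor of a minimal flow with abelian phase group is generated by its continuous eigenfunctions, this assumption yields a continuous eigenfunction $F \in \cC(\T^2)$, with some eigenvalue $\lambda \in \T$, which is not of the form $f \circ \pi$ for $f \in \cC(\T)$.

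The key step uses the fact that the vertical rotations $R_c(x,y) = (x,cy)$, $c \in \T$, commute with $T_a$, so $F \circ R_c$ is again a continuous eigenfunction with the same eigenvalue $\lambda$. Minimality of $(\T^2,T_a)$ forces $|F|$ to be a non-zero constant (hence $F$ is nowhere zero), and the $\lambda$-eigenspace to be one-dimensional, so $F \circ R_c = \chi(c)\, F$ for a continuous character $\chi(c) = c^m$ with $m \in \Z$. The case $m = 0$ would make $F$ constant on the fibres of $\pi$, contradicting the choice of $F$, so $m \neq 0$. Setting $f(x) = F(x,1)$ then gives $F(x,y) = y^m f(x)$, and after rescaling $f$ is a continuous map $\T \to \T$. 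The eigenvalue equation $F \circ T_a = \lambda F$ collapses to
\[
\frac{f(ax)}{f(x)} = \lambda \, t_a(x)^{-m}.
\]

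From here I would reach a contradiction by a dichotomy on $\lambda$. If $\lambda = a^{-k}$ for some $k \in \Z$, then the continuous function $f'(x) = x^k f(x) \colon \T \to \T$ satisfies $\frac{f'(ax)}{f'(x)} = t_a(x)^{-m}$, which is a solution of the minimality equation \eqref{eq:mineq} with the non-zero integer $-m$, contradicting the minimality of $(\T^2,T_a)$. If instead $\lambda \notin \langle a \rangle$, I would use that $(\T^2,T_a)$ is not uniquely ergodic and thus admits a Borel map $g \colon \T \to \T$ with $\frac{g(ax)}{g(x)} = t_a(x)$ almost everywhere, as in \eqref{eq:ueeq} with $n = 1$. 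The product $H = f \cdot g^m$ is then a unimodular Borel function satisfying $\frac{H(ax)}{H(x)} = \lambda$ a.e., so $H \in L^2(\T, m_\T)$ is an eigenfunction of the Koopman operator of $(\T, \lambda_a)$ with eigenvalue $\lambda$. However, the irrational rotation $\lambda_a$ has pure discrete spectrum equal to $\langle a \rangle$, which forces $\lambda \in \langle a \rangle$ and contradicts the assumption.

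The main obstacle is the case $\lambda \notin \langle a \rangle$: pure minimality of $(\T^2,T_a)$ is insufficient to rule out such extraneous eigenvalues, and it is precisely here that the non-unique ergodicity of the Furstenberg--Kodaka system enters in an essential way, via the measurable solution $g$ and the Halmos--von Neumann description of the $L^2$ spectrum of irrational rotation. Both \eqref{eq:mineq} and \eqref{eq:ueeq} (and hence the specific construction of $t_a$) are required in tandem.
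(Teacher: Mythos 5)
Your argument is correct, but it takes a genuinely different route from the paper's. The paper works directly with the regionally proximal relation $Q$: using the automorphisms $S_g(x,y)=(x,yg)$ it shows that $Q$ is the orbit relation of a closed subgroup $G\subseteq\T$ of vertical rotations, and rules out $|G|=k<\infty$ by observing that the resulting factor $(\T^2,U)$, $U(x,y)=(ax,t_a(x)^k y)$, would then be the maximal equicontinuous factor --- hence uniquely ergodic --- while $b^k$ solves equation \eqref{eq:ueeq} for it with $n=1$. You instead describe the maximal equicontinuous factor through its continuous eigenfunctions, use the same vertical symmetry to force any eigenfunction not factoring through $\pi$ into the form $y^m f(x)$ with $m\neq 0$, and then dispose of it by a dichotomy on the eigenvalue: $\lambda\in\langle a\rangle$ contradicts minimality via \eqref{eq:mineq}, while $\lambda\notin\langle a\rangle$ contradicts the pure point $L^2$-spectrum of the irrational rotation via the measurable solution of \eqref{eq:ueeq}. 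Both proofs hinge on the commuting vertical rotations and on a measurable solution of \eqref{eq:ueeq} with $n=1$; yours makes the roles of the two functional equations completely explicit, at the price of importing the eigenfunction description of the maximal equicontinuous factor of a minimal abelian flow and the Koopman spectrum of the irrational rotation, whereas the paper stays inside the $Q$-relation formalism it uses elsewhere in Section~\ref{sec:disjoint}. One small correction: failure of unique ergodicity by itself only yields a solution of \eqref{eq:ueeq} for \emph{some} $n\neq 0$, which in your second case would only give $\lambda^{n}\in\langle a\rangle$; the $n=1$ solution you need is the specific property of Kodaka's $t_a$ recalled before the proposition, so you should invoke that property rather than non-unique ergodicity as such.
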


\begin{proof}
Let $a = \exx{\alpha}$ for some $\alpha \in \R \setminus \Q$. We must show that the regionally proximal relation $Q = Q(\T^2,T_a)$ coincides with $R(\pi)$ where $\pi \colon \T^2 \to \T$ is the projection onto the first coordinate. Since $\pi$ is a homomorphism to an equicontinuous factor, we have $Q \subseteq R(\pi)$.

For any $g \in \T$, let $S_g \colon \T^2 \to \T^2$ be the mapping $S_g(x,y) = (x,yg)$, $(x,y) \in \T^2$. It is an automorphism of the system $(\T^2,T_a)$, i.e., an isomorphism from the system to itself. Thus, $S_g \times S_g(Q) = Q$ for any $g \in \T$. For each $w = (x,y) \in \T^2$, let $G(w) \subseteq \T$ be the set
	\[G(w) = \set{g \in \T}{(w,S_g(w)) \in Q}.
\]
It is easy to check that this is a closed subgroup of $\T$ for any $w \in \T^2$. We claim that $G(w)$ does not depend on the choice of $w$. This follows from the minimality of $(\T^2,T_a)$: given $w, w^{\prime} \in \T^2$, there exists some $p \in E(\T^2,T_a)$ so that $pw = w^{\prime}$, and if $g \in G(w)$, then
	\[(w^{\prime},S_g(w^{\prime})) = (pw,S_g(pw)) = p \times p(w,S_g(w)) \subseteq p \times p(Q) \subseteq Q.
\]
This proves that $G(w) \subseteq G(w^{\prime})$, and the reverse inclusion is proved analogously. Put $G = G(w)$ for any $w \in \T^2$.

We must show that $G = \T$. Knowing that $G$ is a closed subgroup of $\T$, we only need to show that $G$ cannot be a finite cyclic group. Suppose that $|G| = k \in \N$, so $G$ is generated by some $g \in \T$ with $g^k = 1$. Define $U \colon \T^2 \to \T^2$ by $U(x,y) = (ax,t_a(x)^k y)$, $(x,y) \in \T^2$, and define a homomorphism $\phi$ from $(\T^2,T_a)$ to $(\T^2,U)$ by $\phi(x,y) = (x,y^k)$, $(x,y) \in \T^2$. Now, $R(\phi) = Q$, so $(\T^2,U)$ is the maximal equicontinuous factor of $(\T^2,T_a)$, and $(\T^2,U)$ is uniquely ergodic. If $b \in B(\T,\T)$ is such that $b(ax)/b(x) = t_a(x)$ for almost every $x \in \T$, then $b(ax)^k/b(x)^k = t_a(x)^k$ for almost every $x \in \T$. In other words, equation~\eqref{eq:ueeq} for $(\T^2,U)$ is solved by $b^k \in B(\T,\T)$ and $n = 1$, so $(\T^2,U)$ is not uniquely ergodic, and we have arrived at the desired contradiction.
\end{proof}

Forming a large enough multiply disjoint collection of systems $(\T^2,T_a)$ leads us to the exact cardinality of the set of extreme invariant means on $\cD(\Z)$. But first, we must recall some technical details about joinings of invariant measures.

We can construct invariant measures on product flows as products of invariant measures. Firstly, consider a collection $\{X_i\}_{i \in I}$ of compact spaces, define $X_J = \prod_{j \in J} X_j$ for every non-empty subset $J \subseteq I$, and let $\pi_J \colon X_I \to X_J$ and $\pi_i \colon X_I \to X_i$, $i \in I$, be projections. If we pick $\mu_i \in \cM(X_i)$ for each $i \in I$, then there is a unique \emph{product measure} $\mu = \bigotimes_{i \in I} \mu_i \in \cM(X_I)$ such that, for any non-empty finite $J \subseteq I$ and for any Borel sets $A_j \subseteq X_j$, $j \in J$,
	\[\mu \left( \bigcap_{j \in J} \pi_j^{-1}(A_j) \right) = \prod_{j \in J} \mu_j(A_j).
\]
In particular, $(\pi_i)_{\ast} \mu = \mu_i$ for all $i \in I$. If $\{X_i\}_{i \in I}$ is a collection of flows that share a common phase group $T$ and if each $\mu_i$ is invariant, then the product measure $\mu$ is also invariant. More generally, a \emph{joining} of the measures $\mu_i$ is any $\mu \in \cM(T,X_I)$ such that $(\pi_i)_{\ast}\mu = \mu_i$ for every $i \in I$. The set $\join{\mu_i}{i \in I}$ of all such joinings is a (non-empty) compact, convex subspace of $\cM(T,X_I)$, and if each $\mu_i$ is an extreme point in $\cM(T,X_i)$, then all extreme points of $\join{\mu_i}{i \in I}$ are also extreme points in $\cM(T,X_I)$. Recall that when $T$ is locally compact and second countable, in particular if $T = \Z$ or $T = \R$, then the extreme points coincide with ergodic measures. 

\begin{thm}
\label{sec:cardz}
The cardinality of extreme invariant means on $\cD(\Z)$, which is the cardinality of ergodic measures on the universal minimal distal $\Z$-flow, is $2^{\cnt}$.
\end{thm}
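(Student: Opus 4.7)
The plan is to construct a single minimal distal $\Z$-flow with $2^{\cnt}$ ergodic invariant measures by taking an uncountable product of Furstenberg-Kodaka systems, then lift these to $\Z^{\cD}$. The upper bound $2^{\cnt}$ follows from the discussion in the introduction: the surjection $\beta \Z \to \Z^{\cD}$ induces, by amenability of $\Z$, a surjection $\cM(\beta \Z) \to \cM(\Z, \Z^{\cD})$, and $\cM(\Z, \beta\Z)$ has at most $2^{\cnt}$ elements.

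For the lower bound, first select an uncountable set $A \subseteq \R \setminus \Q$ with $|A| = \cnt$ such that $\{1\} \cup A$ is linearly independent over $\Q$; such a set exists by extracting a subset of a Hamel basis of $\R$ over $\Q$. For each $\alpha \in A$, put $a_{\alpha} = \exx{\alpha}$ and consider the Furstenberg-Kodaka system $(\T^2, T_{a_{\alpha}})$, which is minimal distal and admits at least two distinct ergodic invariant measures $\mu_{\alpha}^0, \mu_{\alpha}^1$. By the preceding proposition its maximal equicontinuous factor is the rotation $(\T, \lambda_{a_{\alpha}})$. Using the compactification $\phi_{\alpha} \colon \Z \to \T$, $\phi_{\alpha}(n) = a_{\alpha}^n$, the subgroup $\phi_{\alpha}^{\ast}(\wh{\T})$ of $\wh{\Z} \cong \T$ is the cyclic subgroup generated by $\alpha \bmod 1$. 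The $\Q$-linear independence of $\{1\} \cup A$ translates exactly into independence of these cyclic subgroups, so Theorem~\ref{sec:indperp} yields $\perp \{(\T, \lambda_{a_{\alpha}})\}_{\alpha \in A}$. Since each $(\T^2, T_{a_{\alpha}})$ is distal, it is $PI$ and every base point is a $K(u)$-point, so Theorem~\ref{sec:main} gives that the product flow $X = \prod_{\alpha \in A} (\T^2, T_{a_{\alpha}})$ is minimal.

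Next, produce $2^{\cnt}$ ergodic measures on $X$. For each $f \in \{0,1\}^A$, the joining set $\join{\mu_{\alpha}^{f(\alpha)}}{\alpha \in A}$ is non-empty (the product measure lies in it), compact and convex, and its extreme points are extreme in $\cM(\Z, X)$ by the result cited just before the statement. Picking one extreme point from each joining set gives $2^{\cnt}$ candidates, and they are pairwise distinct because joining sets corresponding to different $f$ are disjoint (the coordinate projections disagree). Since $\Z$ is second countable and locally compact, these extreme invariant measures are exactly the ergodic ones.

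Finally, lift to $\Z^{\cD}$. Being a minimal distal $\Z$-flow, $X$ is a factor of $\Z^{\cD}$ via some homomorphism $\pi$. For each ergodic $\nu$ on $X$, the set $M_{\nu} = \{\tilde{\nu} \in \cM(\Z, \Z^{\cD}) : \pi_{\ast}\tilde{\nu} = \nu\}$ is non-empty by amenability of $\Z$, and it is a face of $\cM(\Z, \Z^{\cD})$ because $\nu$ is already extreme in $\cM(\Z, X)$. By Krein-Milman $M_{\nu}$ has an extreme point, which is then extreme in $\cM(\Z, \Z^{\cD})$ and hence ergodic. Distinct $\nu$ yield disjoint sets $M_{\nu}$, producing $2^{\cnt}$ distinct ergodic measures on $\Z^{\cD}$. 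The main subtlety is the bookkeeping that ensures extremality is preserved at every step (passage to joinings, then to lifts); the face argument handles both cleanly. The independence computation for the cyclic subgroups of $\T$ is straightforward once the identification $\wh{\Z} \cong \T$ is pinned down correctly.
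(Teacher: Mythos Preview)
Your proof is correct and follows essentially the same route as the paper: the same Hamel-basis choice of rotation angles, the same appeal to Theorem~\ref{sec:indperp} for the equicontinuous factors and Theorem~\ref{sec:main} for the product, the same joining/extreme-point argument, and the same upper bound via $\beta\Z$. The only cosmetic difference is that you spell out the face argument for lifting ergodic measures to $\Z^{\cD}$, whereas the paper simply asserts that ergodic lifts exist.
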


\begin{proof}
For each $a = \exx{\alpha}$, $\alpha \in \R \setminus \Q$, let $(\T^2,T_a)$ be the minimal distal system from the beginning of this section, and pick distinct ergodic measures $\mu_{a,0}$ and $\mu_{a,1}$ from $\cM(\T^2,T_a)$. Let $B_1$ be a Hamel basis for the vector space $\R$ over the scalar field $\Q$, and assume that $1 \in B_1$. Define $B = B_1 \setminus \{1\}$, so $B$ consists of irrational numbers. Let $A \subseteq \T$ be the set of the elements $\exx{\alpha}$ for $\alpha \in B$. Note that the mapping $\alpha \mapsto \exx{\alpha} : B \to A$ is injective, so the cardinality of $A$ is the cardinality of $B$, namely $\cnt$. Now, the system $(\T,\lambda_a)$ is minimal for any $a \in A$, and the collection $\{(\T,\lambda_a)\}_{a \in A}$ is multiply disjoint. First, we use Theorem~\ref{sec:indperp} to show that the maximal equicontinuous factors form a multiply disjoint collection: associating $(\T,\lambda_a)$, $a \in A$, with the compactification $(\phi_a,\T)$ of $\Z$ where $\phi_a(n) = a^n$, $n \in \Z$, and identifying the subgroup $\phi_a^{\ast}(\wh{\T})$ of $\wh{\Z}$ with the subgroup $G_a = \{a^n\}_{n \in \Z} \subseteq \T$, the collection $\{G_a\}_{a \in A}$ is independent. Thus, the product system $(X,T)$, $X = (\T^2)^A$ and $T(x)_a = T_a(x_a)$ for all $x \in X$ and $a \in A$, is minimal by Theorem~\ref{sec:main}.

For any $\omega \in \{0,1\}^A$, we may consider the joining of the measures $\mu_{a,\omega(a)}$, $a \in A$, so $J(\omega) = \join{\mu_{a,\omega(a)}}{a \in A}$ is a non-empty subspace of $M(X,T)$, and each $\mu \in J(\omega)$ maps to the measures $\mu_{a,\omega(a)}$ under the pushforwards induced by the projections from $X$ to $X_a$. Clearly, $J(\omega) \cap J(\omega^{\prime}) = \emptyset$ when $\omega, \omega^{\prime} \in \{0,1\}^A$ are distinct. For each $\omega \in \{0,1\}^A$, pick $\mu_\omega \in \ext J(\omega)$, so $\mu_\omega$ is an ergodic measure on $X$. We see that the set $\set{\mu_\omega}{\omega \in \{0,1\}^A}$ has cardinality $2^{\cnt}$. Each $\mu_\omega$ can be lifted to an ergodic measure on the universal minimal distal system $\Z^{\cD}$, so the latter admits at least $2^{\cnt}$ ergodic measures. In other words, there are at least $2^{\cnt}$ extreme invariant means on $\cD(\Z)$. On the other hand, the universal point-transitive system, which is simply $\beta \Z$, has exactly $2^{\cnt}$ invariant measures since $l^{\infty}(\Z)$ has exactly $2^{\cnt}$ invariant means (\cite{chou76}), so the cardinality of ergodic measures on $\Z^{\cD}$ is also $2^{\cnt}$.
\end{proof}

It is apparent from the last argument that the cardinality of all invariant means on $\cD(\Z)$ is also $2^{\cnt}$.

As in the case of $\Z$-actions, we can find a minimal distal $\R$-flow with (at least) $2^{\cnt}$ ergodic measures. This is achieved by constructing a metric, minimal, distal $\R$-flow $F = (X,\sigma)$ admitting multiple ergodic measures (necessarily of cardinality $\cnt$ due to metrisability), applying Theorem~\ref{sec:perpreparam} to find an uncountable set $A \subseteq \R$ so that $\perp \{F_a\}_{a \in A}$, fixing distinct ergodic measures $\mu_{a,0}$ and $\mu_{a,1}$ for $F_a$, $a \in A$, and picking an extreme point from each of the $2^{\cnt}$ pairwise disjoint sets $\join{\mu_{a,\omega(a)}}{a \in A}$, $\omega \in \{0,1\}^A$. The rest of the argument is also similar to the proof of Theorem~\ref{sec:cardz}; the cardinality of invariant measures on $\LC(\R)$ is again $2^{\cnt}$ (\cite{lp86}). Finding $F$ is the only remaining task. The idea is to interpolate a suitable distal function on $\Z$ to a distal function on $\R$. The result quoted below shows how this is done.

\begin{thm}[{\cite[Theorem 4.8(iv)]{lmp92}}]
Let $G$ be a locally compact group, let $N$ be a closed normal subgroup of $G$, and suppose that there is a compact set $K \subseteq G$ such that $G = KN$. Let $g \in \cC(K \times N)$ be such that $g(h,m) = g(k,n)$ whenever $(h,m), (k,n) \in K \times N$ satisfy $k^{-1}h \in N$ and $hm = kn$. Let $f \in \cC(G)$ be the well-defined function $f(kn) = g(k,n)$, $k \in K, n \in N$. Then, $f \in \cD(G)$ if and only if $g(k,\cdot) \in \cD(N)$ for every $k \in K$ and $\set{g(\cdot,n) \in \cC(K)}{n \in N}$ is equicontinuous.
\end{thm}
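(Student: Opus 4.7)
The plan is to work with the orbit closure $X_f = \overline{\{R_t f : t \in G\}}$ taken with the pointwise convergence topology and the right-translation action of $G$, using the characterisation that $f \in \cD(G)$ iff the $G$-flow $X_f$ is distal iff the enveloping semigroup $E(X_f)$ is a group. The decomposition $G = KN$ with $K$ compact and $N$ closed normal lets us split the dynamics: the $K$-direction is forced to be of an equicontinuous nature (being compact), while the $N$-direction carries the genuine distal behaviour along each fibre $g(k,\cdot)$.

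For the forward direction, assume $f \in \cD(G)$. Since $\cD(G)$ is left-translation invariant, $L_k f \in \cD(G)$ for every $k \in K$, and $g(k,\cdot)$ is precisely the restriction of $L_k f$ to $N$. The $N$-orbit closure of this restriction embeds $N$-equivariantly, via the restriction map, into the $G$-orbit closure of $L_k f$, hence inherits distality as an $N$-flow; therefore $g(k,\cdot) \in \cD(N)$. For the equicontinuity of $\{g(\cdot,n) : n \in N\}$ in $\cC(K)$, argue by contradiction: a failure of equicontinuity produces nets $k_i, k_i' \in K$ with $k_i^{-1}k_i' \to e$ together with $n_i \in N$ such that $|g(k_i,n_i) - g(k_i',n_i)| \geq \epsilon$. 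Passing to subnets so that $k_i, k_i' \to k \in K$, one forms the right-translates $R_{k_i n_i}f, R_{k_i' n_i}f \in X_f$ and extracts cluster points $\phi_1,\phi_2$ in the compact set $X_f$. Normality of $N$ lets one rewrite $k_i' n_i = (k_i n_i)(n_i^{-1} k_i^{-1} k_i' n_i)$ with the last factor in $N$ and controlled via the convergence $k_i^{-1}k_i'\to e$, which forces $(\phi_1,\phi_2)$ to be a proximal pair of elements of the distal flow $X_f$; since $\phi_1(e) \neq \phi_2(e)$, this contradicts distality.

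For the backward direction, assume both conditions. It suffices to show that the flow $X_f$ is distal, so take $\phi_1, \phi_2 \in X_f$ that are proximal and aim to conclude $\phi_1 = \phi_2$. Realise $\phi_j$ as a pointwise limit of $R_{t_\alpha^j}f$ with decomposition $t_\alpha^j = k_\alpha^j n_\alpha^j$, and use compactness of $K$ to pass to a subnet with $k_\alpha^j \to k^j \in K$. The equicontinuity of $\{g(\cdot,n) : n \in N\}$ allows the substitution $k_\alpha^j \rightsquigarrow k^j$ up to a uniform error that vanishes along the subnet, so that after a re-decomposition $x k_\alpha^j = h_\alpha(x) m_\alpha(x) \in KN$ (again using compactness of $K$ to extract $h_\alpha(x) \to h(x)$) the values $f(x t_\alpha^j) = g(h_\alpha(x), m_\alpha(x) n_\alpha^j)$ are asymptotically equal to $g(h(x), m_\alpha(x) n_\alpha^j)$, reducing the question of what $\phi_j(x)$ is to the behaviour of the $N$-orbit of the function $g(h(x),\cdot) \in \cD(N)$. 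Proximality of $\phi_1$ and $\phi_2$ therefore transfers to a proximality of elements in the distal $N$-flow on each such fibre, forcing coincidence at every $x$ and hence $\phi_1 = \phi_2$.

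The hard part is the backward direction: the technical bookkeeping needed to push the argument through the non-unique decomposition $G = KN$ and the conjugation twists introduced by the normality of $N$, while simultaneously managing the continuity of pointwise limits in $X_f$ in a way that legitimately permits invocation of the equicontinuity assumption in the $K$-coordinate, is the main delicate point; one likely wants to first reduce to the case where $K \cap N = \{e\}$ (or at least where the decomposition is measurable in a controlled way) to make the limit extractions clean.
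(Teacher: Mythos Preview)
The paper does not prove this theorem at all: it is quoted verbatim as Theorem 4.8(iv) from \cite{lmp92} and used as a black box to interpolate a distal function on $\Z$ to one on $\R$. There is therefore no proof in the paper to compare your proposal against.

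As for the proposal itself, the overall strategy (work in the orbit-closure model $X_f$, separate the $K$- and $N$-directions, and use distality of the $N$-fibres together with equicontinuity across $K$) is reasonable and is indeed the spirit of the original argument in \cite{lmp92}. Two points deserve tightening. First, in the forward direction your claim that ``the $N$-orbit closure of $g(k,\cdot)$ embeds $N$-equivariantly, via the restriction map, into the $G$-orbit closure of $L_k f$'' is not literally correct: restriction to $N$ is not injective on $X_{L_k f}$, so you do not get an embedding. What you actually need is the general fact that the restriction to a closed subgroup of a distal function is distal, which requires a separate (short) argument rather than a mere embedding. Second, your backward direction is, as you yourself flag, only a sketch: the bookkeeping that lets you pass from proximality in $X_f$ to proximality in the $N$-fibre flows, through the non-unique decomposition and the conjugation $n \mapsto k^{-1}nk$, is exactly where the work lies, and the suggestion to reduce to $K \cap N = \{e\}$ is not available in general. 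If you want a complete proof you should consult \cite{lmp92} directly.
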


In our case, $G = \R$, $N = \Z$, and $K = [0,1]$. We start with a real-valued function $h \in \cD(\Z)$ for which the averages $(1/N)\sum_{n = 0}^{N-1} h(n)$ diverge as $N$ grows. Such functions can be obtained from the system $(\T^2,T_a)$ for an arbitrary $a = \exx{\alpha} \in \T$ with $\alpha \in \R \setminus \Q$: since the system is minimal but not uniquely ergodic, there is a point $x \in \T^2$ and a function $k \in \cC(\T^2)$ such that the sequence of the averages $(1/N)\sum_{n = 0}^{N-1} k(T_a^n x)$ diverges as $N$ grows (see \cite{etvj}, Theorem 4.10, especially the alternative proof), so we may define $h$ as either the real or imaginary part of the function $n \mapsto k(T_a^n x) : \Z \to \C$, whichever has divergent averages. Moreover, we may assume that $h(2n) = 0$ for every $n \in \Z$ since, in general, $h = p_0 h + p_1 h$ where $p_i$ is the characteristic function of $2\Z + i$, $i = 0,1$; both $p_0 h$ and $p_1 h$ are distal, and at least one of them must have divergent averages, so we can replace $h$ by either $R_1 p_0 h$ or $p_1 h$ if necessary (recall that $R_1$ is the shift by $1$). We define $g \colon [0,1] \times \Z \to \C$ by $g(t,n) = (1-t)h(n) + th(n+1)$ for all $(t,n) \in [0,1] \times \Z$. This is a bounded, continuous function, and if $n \in \Z$, then $g(1,n) = g(0,n+1)$. Define $f \in \cC(\R)$ by $f(t+n) = g(t,n)$, $(t,n) \in [0,1] \times \Z$. Now, $g(t,\cdot) \in \cD(\Z)$ for any $t \in [0,1]$, and each $g(\cdot,n)$ is $\norm{h}$-Lipschitz, so $\set{g(\cdot,n)}{n \in \Z}$ is equicontinuous. The theorem above says that $f \in \cD(\R)$.

There are two strictly increasing sequences $(A_n)$ and $(B_n)$ in $\N$ and some distinct $a,b \in \R$ such that
\begin{equation*}
\frac{1}{A_n} \sum_{m = 0}^{A_n-1} h(m) \conv{n} a \hspace{15pt} \text{and} \hspace{15pt} \frac{1}{B_n} \sum_{m = 0}^{B_n-1} h(m) \conv{n} b.
\end{equation*}
We may modify the sequences $(A_n)$ and $(B_n)$ if necessary so that $A_n, B_n \in 2\N$ for all $n \in \N$. Define means $\alpha_n, \beta_n \in M(\cD(\R))$ for all $n \in \N$ by
\begin{equation*}
\alpha_n(q) = \frac{1}{A_n} \int_0^{A_n} q(x) \: dx \hspace{15pt} \text{and} \hspace{15pt} \beta_n(q) = \frac{1}{B_n} \int_0^{B_n} q(x) \: dx,
\end{equation*}
$q \in \cD(\R)$.
Let $\alpha, \beta \in \cD(\R)^{\ast}$ be weak$^{\ast}$ cluster points of the sequences $(\alpha_n)$ and $(\beta_n)$, respectively. They are invariant means (see \cite{bjm}, 3.4(d) on page 80), and it is easy to check that $\alpha(f) = a$ and $\beta(f) = b$. The $\R$-flow $F = (X_f,\sigma)$, where $X_f = \set{T_\mu f}{f \in \R^{\LC}}$ and $\sigma(t,f^{\prime}) = R_t f^{\prime}$ for all $t \in \R$ and $f^{\prime} \in X_f$, is minimal, distal, metric (the topologies of uniform convergence on compact sets and of pointwise convergence coincide on $X_f$ since $f$ is uniformly continuous) and admits at least two invariant measures. Consequently, there are at least two ergodic measures on $X_f$. To summarise, the following theorem is now proved:

\begin{thm}
\label{sec:cardr}
The cardinality of extreme invariant means on $\cD(\R)$, which is the cardinality of ergodic measures on the universal minimal distal $\R$-flow, is $2^{\cnt}$.
\end{thm}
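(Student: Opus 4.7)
The plan is to adapt the argument of Theorem~\ref{sec:cardz}, with the flow $F = (X_f,\sigma)$ just constructed playing the role of the systems $(\T^2,T_a)$. The preceding paragraphs furnish $F$ as a metric, minimal, distal $\R$-flow admitting two distinct invariant means on $\phi_f\cC(X_f)$ (distinguished by $\alpha(f) \neq \beta(f)$); since $\R$ is locally compact and second countable, ergodic invariant measures coincide with extreme points of $\cM(\R,F)$, so Krein--Milman produces at least two distinct ergodic measures on $X_f$.

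Next, apply Theorem~\ref{sec:perpreparam} to $F$ to obtain an uncountable $A \subseteq \R$ with $\perp \{F_a\}_{a \in A}$. A closer inspection of the Zorn argument in that proof shows that one in fact gets $|A| = \cnt$: if a maximal element of the relevant family had cardinality strictly less than $\cnt$, then, $G$ being countable, the excluded set $B$ constructed there would also have cardinality below $\cnt$, permitting $A$ to be enlarged and contradicting maximality. The product flow $Y = \prod_{a \in A} F_a$ is then minimal (by multiple disjointness) and distal (products of distal flows are distal), and so arises as a factor of the universal minimal distal $\R$-flow $\R^{\cD}$.

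Since each $F_a$ has the same invariant measures as $F$, I would fix for every $a \in A$ a pair of distinct ergodic measures $\mu_{a,0}, \mu_{a,1} \in \cM(\R,F_a)$. For each $\omega \in \{0,1\}^A$, the joining $J(\omega) = \join{\mu_{a,\omega(a)}}{a \in A}$ is a non-empty compact convex subset of $\cM(\R,Y)$; the sets $\{J(\omega)\}_{\omega \in \{0,1\}^A}$ are pairwise disjoint (distinct marginals on some $F_a$), and any extreme point of $J(\omega)$ is extreme, hence ergodic, in all of $\cM(\R,Y)$ because the prescribed marginals $\mu_{a,\omega(a)}$ are themselves extreme. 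Selecting $\mu_\omega \in \ext J(\omega)$ for each $\omega$ exhibits $2^{|A|} = 2^{\cnt}$ distinct ergodic measures on $Y$, each of which lifts through the factor map $\R^{\cD} \to Y$ (using amenability of $\R$) to an ergodic measure on $\R^{\cD}$.

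The matching upper bound is immediate: $\R^{\cD}$ is a factor of $\R^{\LC}$, the latter carries exactly $2^{\cnt}$ invariant measures by \cite{lp86}, and by amenability every invariant measure on $\R^{\cD}$ is the pushforward of one on $\R^{\LC}$. The only genuinely delicate point in the whole proof is securing $|A| = \cnt$ rather than merely uncountable, since Theorem~\ref{sec:perpreparam} as stated provides only the weaker conclusion; this is the main obstacle, but as noted above the required strengthening is already built into the Zorn argument in that proof and needs no new idea.
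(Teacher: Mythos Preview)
Your proposal is correct and follows essentially the same route as the paper: construct the metric, minimal, distal, non-uniquely ergodic $\R$-flow $F$ as in the preceding paragraphs, apply Theorem~\ref{sec:perpreparam} to obtain a multiply disjoint family $\{F_a\}_{a\in A}$, produce $2^{|A|}$ ergodic measures on the product via extreme joinings, lift them to $\R^{\cD}$, and bound from above using \cite{lp86}. Your treatment is in fact slightly more careful than the paper's: the paper asserts there are ``$2^{\cnt}$ pairwise disjoint sets $\join{\mu_{a,\omega(a)}}{a\in A}$'' while Theorem~\ref{sec:perpreparam} as stated only guarantees that $A$ is uncountable; your observation that the Zorn argument there actually forces $|A|=\cnt$ (since $G$ countable and $|A|<\cnt$ would still make the excluded set $B$ have cardinality below $\cnt$) closes a gap the paper leaves implicit.
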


Again, the cardinality of all invariant means on $\cD(\R)$ is likewise $2^{\cnt}$. Theorems~\ref{sec:cardz} and \ref{sec:cardr} also tell us something about ergodic measures on $\beta \Z$ and $\R^{\LC}$:

\begin{cor}
\label{sec:mlidc}
Let $X = \beta \Z$ or $X = \R^{\LC}$. For any minimal left ideal $M$ in $X$, there are $2^{\cnt}$ ergodic measures on $X$ whose support is contained in $M$.
\end{cor}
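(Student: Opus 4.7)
The plan is to transport the $2^{\cnt}$ ergodic measures on $T^{\cD}$ supplied by Theorem~\ref{sec:cardz} (for $T = \Z$) or Theorem~\ref{sec:cardr} (for $T = \R$) up to $M$ along a natural factor map, and then extend them to $X$ by zero outside $M$. Here $X = \beta\Z$ corresponds to $\LC(\Z) = l^{\infty}(\Z)$, so I may treat both cases uniformly.

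First I will construct a factor map $\pi \colon M \to T^{\cD}$. Since $\cD(T) \subseteq \LC(T)$, the inclusion gives a canonical continuous surjective semigroup homomorphism $\phi \colon X \to T^{\cD}$. Its restriction carries the minimal left ideal $M \subseteq X$ onto a minimal left ideal of $T^{\cD}$. Because $T^{\cD}$ is a group, it has only itself as a minimal left ideal, so $\phi(M) = T^{\cD}$ and $\pi := \phi|_M$ is a surjective homomorphism of minimal $T$-flows.

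Next, for each ergodic $\nu$ on $T^{\cD}$, amenability of $T$ applied to $\pi$ produces a non-empty closed convex set
\[
L(\nu) = \set{\mu \in \cM(T,M)}{\pi_{\ast}\mu = \nu},
\]
which has extreme points by Krein--Milman. The crucial step is to argue that any $\mu \in \ext L(\nu)$ is already ergodic on $M$: if $\mu = \tfrac{1}{2}(\mu_1 + \mu_2)$ with $\mu_i \in \cM(T,M)$, then $\nu = \tfrac{1}{2}(\pi_{\ast}\mu_1 + \pi_{\ast}\mu_2)$, and because $T$ is locally compact and second countable, ergodicity of $\nu$ coincides with extremality in $\cM(T,T^{\cD})$; hence $\pi_{\ast}\mu_1 = \pi_{\ast}\mu_2 = \nu$, putting both $\mu_i$ in $L(\nu)$, and extremality of $\mu$ in $L(\nu)$ then forces $\mu_1 = \mu_2 = \mu$. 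Distinct $\nu$'s yield distinct lifts, producing $2^{\cnt}$ ergodic measures on $M$.

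Each such $\mu$ extends to a Borel probability measure $\tilde{\mu}$ on $X$ by $\tilde{\mu}(A) := \mu(A \cap M)$, which is $T$-invariant, supported in $M$, and ergodic on $X$ because any $T$-invariant Borel $A \subseteq X$ meets $M$ in a $T$-invariant Borel subset, whose $\mu$-measure is $0$ or $1$. This yields the required $2^{\cnt}$ ergodic measures on $X$ with support in $M$. The principal obstacle is the extreme-lift argument in the previous paragraph, where the identification of ergodicity on $T^{\cD}$ with extremality in $\cM(T,T^{\cD})$ is essential; the remaining steps are structural consequences of the universality of $T^{\cD}$ and the algebraic nature of $M$.
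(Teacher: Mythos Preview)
Your proof is correct and follows essentially the same route as the paper: construct a factor map $\pi\colon M\to T^{\cD}$, lift the $2^{\cnt}$ ergodic measures from $T^{\cD}$ to $M$, and view them as measures on $X$. The only differences are cosmetic: the paper obtains $\pi$ by invoking that $(T,M)$ is a universal minimal $T$-flow, whereas you restrict the canonical semigroup map $X\to T^{\cD}$; and the paper simply asserts the existence of ergodic lifts, while you supply the standard Krein--Milman/extremality argument in detail.
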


\begin{proof}
Let $T = \Z$ or $T = \R$ so that $X$ is a compactification of $T$. Let $M \subseteq X$ be a minimal left ideal, i.e., a minimal set for the flow $(T,X)$. Now, the subflow $(T,M)$ is a universal minimal $T$-flow, so there is a homomorphism $\pi \colon M \to T^{\cD}$. Any ergodic $\mu \in \cM(T,T^{\cD})$ can be lifted to an ergodic $\wt{\mu} \in \cM(T,M)$ with respect to $\pi$, that is, $\pi_{\ast} \wt{\mu} = \mu$. Thus, there are $2^{\cnt}$ ergodic measures on $M$. Each of them can be viewed as an invariant measure on $X$. Moreover, if $\wt{\mu}$ is ergodic on $M$, then it is also ergodic as an invariant measure on $X$.
\end{proof}

The final matter we address is the non-separability of the quotient space $\cD(T)/V$ for $T = \Z$ or $T = \R$ and for any closed subspace $V \subseteq \cD(T)$ such that every function in $V$ is almost convergent. Whichever the phase group, we can find an uncountable, multiply disjoint collection $\cX = \{X_i\}_{i \in I}$ of minimal distal $T$-flows, each admitting more than one invariant measure. For each $i \in I$, pick $x_i \in X_i$ and a real-valued function $g_i \in \cC(X_i)$ such that the function $f_i \in \cD(T)$ defined by $f_i(t) = g_i(tx_i)$, $t \in T$, is not almost convergent. The set
	\[K_i = \set{\mu(f_i) \in \R}{\mu \in IM(\cD(T))}
\]
is a non-degenerate closed interval for each $i \in I$. By scaling the functions $f_i$ and adding constants, if necessary, we may assume that $K_i = [0,1]$ for every $i \in I$. As argued in the proof of Theorem~\ref{sec:cardz}, the multiple disjointness of $\cX$ implies that, for any function $\omega \colon I \to [0,1]$, there exists some $\mu_\omega \in IM(\cD(T))$ such that $\mu_\omega(f_i) = \omega(i)$ for every $i \in I$ (the steps ensuring ergodicity are, of course, omitted here).

Next, define a function $R \colon \cD(T) \to \R$ by
	\[R(f) = \sup \set{|\mu(f) - \nu(f)|}{\mu,\nu \in IM(\cD(T))}
\]
for all $f \in \cD(T)$. Note that the supremum is attained for some means by the weak$^{\ast}$ compactness of $IM(\cD(T))$. The function $R$ has the following properties:
\begin{itemize}
\item[(i)] $R(f) = 0$ if and only if $f \in \cD(T)$ is almost convergent;
\item[(ii)] $R(f+g) \leq R(f) + R(g)$ for all $f,g \in \cD(T)$;
\item[(iii)] $R(f) \leq 2 \norm{f}$ for all $f \in \cD(T)$;
\item[(iv)] $R(f_i - f_j) \geq 2$ for all distinct $i,j \in I$.
\end{itemize}
The last estimate is obtained by choosing functions $\omega_1, \omega_2 \colon I \to [0,1]$ with $\omega_1(i) = \omega_2(j) = 1$ and $\omega_1(j) = \omega_2(i) = 0$, so $|\mu_{\omega_1}(f_i-f_j) - \mu_{\omega_2}(f_i-f_j)| = 2$.

Letting $B_i \subseteq \cD(T)$ denote the norm open ball centred at $f_i$ and of radius $1/2$ for each $i \in I$, we see that the collection $\{B_i + V\}_{i \in I}$ of non-empty open sets in the quotient space $\cD(T)/V$ is pairwise disjoint. For if $i,j \in I$ and if $(B_i + V) \cap (B_j + V) \neq \emptyset$, we can find $h_i \in B_i$ and $h_j \in B_j$ such that $h_i - h_j \in V$. We may write $h_i = g_i + f_i$ and $h_j = g_j + f_j$ for some $g_i,g_j \in \cD(T)$ such that $\norm{g_i}, \norm{g_j} < 1/2$. Then,
\begin{align*}
R(f_i - f_j) &\leq R(g_i - g_j) + R(h_i - h_j) \leq R(g_i) + R(g_j) \\
&\leq 2\norm{g_i} + 2\norm{g_j} < 2,
\end{align*}
so we must have $i = j$ by property (iv) in the list above. In conclusion, the space $\cD(T)/V$ has an uncountable, pairwise disjoint collection of non-empty open sets, completing the proof of the following theorem:

\begin{thm}
Let $T = \Z$ or $T = \R$, and let $V \subseteq \cD(T)$ be a closed subspace such that each function in $V$ is almost convergent. Then, the space $\cD(T)/V$ is not separable.
\end{thm}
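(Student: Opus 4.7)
The plan is to produce an uncountable family $\{U_i\}_{i \in I}$ of pairwise disjoint non-empty open sets in $\cD(T)/V$, which immediately forbids separability. The functions cutting out these open sets will come from an uncountable multiply disjoint collection $\cX = \{X_i\}_{i \in I}$ of minimal distal $T$-flows, each admitting more than one invariant measure. Such collections are at hand: for $T = \Z$ we can use the family $\{(\T^2, T_a)\}_{a \in A}$ with $A$ associated to a Hamel basis of $\R$ over $\Q$ as in the proof of Theorem~\ref{sec:cardz}, and for $T = \R$ we use Theorem~\ref{sec:perpreparam} applied to the metric minimal distal $\R$-flow $F = (X_f,\sigma)$ constructed in Section~\ref{sec:cardr}.

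For each $i \in I$, select a transitive point $x_i \in X_i$ and a real $g_i \in \cC(X_i)$ such that the pullback $f_i(t) = g_i(t x_i) \in \cD(T)$ is not almost convergent, so that the set $K_i = \{\mu(f_i) : \mu \in IM(\cD(T))\}$ is a non-degenerate closed interval, which after an affine normalisation we may take to equal $[0,1]$. The first key step is to exploit the multiple disjointness of $\cX$ via joinings: mimicking the argument of Theorem~\ref{sec:cardz}, for any prescribed $\omega \colon I \to [0,1]$ we choose invariant measures on each $X_i$ realising the value $\omega(i)$, form an arbitrary joining on the product $\prod_i X_i$ (which is minimal by Theorem~\ref{sec:main}), and lift the resulting invariant mean to $\cD(T)$ through the universal compactification $T^{\cD}$. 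The outcome is a mean $\mu_\omega \in IM(\cD(T))$ with $\mu_\omega(f_i) = \omega(i)$ for all $i$.

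The next step introduces the auxiliary function
\[
R(f) = \sup \set{|\mu(f) - \nu(f)|}{\mu,\nu \in IM(\cD(T))}, \qquad f \in \cD(T),
\]
which is subadditive, vanishes precisely on the almost convergent functions (and hence on all of $V$ by hypothesis), and satisfies $R(f) \leq 2\norm{f}$. Choosing $\omega_1, \omega_2 \colon I \to [0,1]$ with $\omega_1(i) = \omega_2(j) = 1$ and $\omega_1(j) = \omega_2(i) = 0$ and testing against $\mu_{\omega_1}$ and $\mu_{\omega_2}$ shows $R(f_i - f_j) \geq 2$ whenever $i \neq j$.

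To finish, let $B_i \subseteq \cD(T)$ be the norm-open ball of radius $1/2$ around $f_i$. If $(B_i + V) \cap (B_j + V)$ were non-empty for some $i \neq j$, we could write $f_i + g_i - (f_j + g_j) \in V$ with $\norm{g_i}, \norm{g_j} < 1/2$; applying $R$, using its vanishing on $V$ and its subadditivity, would give $R(f_i - f_j) < 2$, a contradiction. Hence $\{B_i + V\}_{i \in I}$ is an uncountable pairwise disjoint family of non-empty open sets in $\cD(T)/V$, so this quotient is not separable. The main obstacle in the plan is really the supply of an uncountable multiply disjoint collection of minimal distal $T$-flows with non-unique ergodicity, but both ingredients (the multiple disjointness machinery of Section~\ref{sec:disjoint} and Furstenberg-type constructions) have already been developed, so the present argument is essentially a packaging of those tools.
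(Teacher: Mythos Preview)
Your proposal is correct and follows essentially the same approach as the paper's own proof: the same multiply disjoint collections, the same normalised functions $f_i$, the same auxiliary functional $R$, and the same radius-$1/2$ balls $B_i$ yielding the uncountable pairwise disjoint family $\{B_i + V\}_{i \in I}$.
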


For example, $V$ could be $\AP(T)$ or the space of all almost convergent functions in $\cD(T)$.

\section{Acknowledgements}

This work is part of my Ph.D. thesis prepared under the supervision of M. Filali at the University of Oulu. I would like to thank him for introducing me to the problems that inspired the research in this article and also for his helpful comments regarding the matters covered in the introduction.


\begin{thebibliography}{HD}

\normalsize
\baselineskip=17pt

\bibitem[1]{auslander},
J. Auslander,
\emph{Minimal flows and their extensions},
North-Holland Publishing Co., Amsterdam, 1988.

\bibitem[2]{bekkamayer}
M. B. Bekka and M. Mayer,
\emph{Ergodic theory and topological dynamics of group actions on homogeneous spaces},
London Mathematical Society Lecture Note Series 269,
Cambridge University Press, Cambridge, 2000.

\bibitem[3]{bjm}
J. F. Berglund, H.D. Junghenn and P. Milnes,
\emph{Analysis on Semigroups},
John Wiley \& Sons Inc., New York, 1989.

\bibitem[4]{boufil11}
A. Bouziad and M. Filali,
\emph{On the size of quotients of function spaces on a topological group},
Studia Math. {202} (2011), no.~3, 243--259.

\bibitem[5]{burckel}
R. B. Burckel,
\emph{Weakly almost periodic functions on semigroups},
Gordon and Breach Science Publishers, New York-London-Paris, 1970.

\bibitem[6]{chou76}
C. Chou,
\emph{The exact cardinality of the set of invariant means on a group},
Proc. Amer. Math. Soc. {55} (1976), no.~1, 103--106.


\bibitem[7]{chou82}
C. Chou,
\emph{Weakly almost periodic functions and {F}ourier-{S}tieltjes algebras of locally compact groups},
Trans. Amer. Math. Soc. {274} (1982), no.~1,141--157.

\bibitem[8]{devries}
J. de Vries,
\emph{Elements of topological dynamics},
Kluwer Academic Publishers Group, Dordrecht, 1993.

\bibitem[9]{egs75}
R. Ellis, S. Glasner and L. Shapiro,
\emph{Proximal-isometric ({$\mathcal{PI}$}) flows},
Advances in Math. {17} (1975), no.~3, 213--260.

\bibitem[10]{egs76}
R. Ellis, S. Glasner and L. Shapiro,
\emph{Algebraic equivalents of flow disjointness},
Illionois J. Math. {20} (1976), no.~2, 354--360.

\bibitem[11]{ek71}
R. Ellis and H. Keynes,
\emph{A characterization of the equicontinuous structure relation},
Trans. Amer. Math. Soc. {161} (1971), 171--183.

\bibitem[12]{filgal13}
M. Filali and J. Galindo,
\emph{Interpolation sets and the size of quotients of function spaces on a locally compact group},
arxiv:1307.4941.

\bibitem[13]{fnm10}
M. Filali,  M. Neufang and M. Sangani Monfared,
\emph{On ideals in the bidual of the {F}ourier algebra and related algebras},
J. Funct. Anal. 258 (2010), no.~9, 3117--3133.

\bibitem[14]{filpym03}
M. Filali and J. S. Pym,
\emph{Right cancellation in the {$\mathcal{LUC}$}-compactification of a locally compact group},
Bull. London Math. Soc. {35} (2003), no.~1, 128--134.

\bibitem[15]{filsalmi07}
M. Filali and P. Salmi,
\emph{One-sided ideals and right cancellation in the second dual of the group algebra and similar algebras},
J. Lond. Math. Soc. (2) {75} (2007), no.~1, 35--46.

\bibitem[16]{furstenberg61}
H. Furstenberg,
\emph{Strict ergodicity and transformation of the torus},
Amer. J. Math. {83} (1961), 573--601.

\bibitem[17]{etvj}
E. Glasner,
\emph{Ergodic theory via joinings},
Mathematical Surveys and Monographs 101,
American Mathematical Society, Providence, R.I., 2003.

\bibitem[18]{hofmann}
K. H. Hofmann and S. A. Morris,
\emph{The structure of compact groups},
2nd ed., de Gruyter Studies in Mathematics 25,
Walter de Gruyter \& Co., Berlin, 2006.

\bibitem[19]{kodaka95}
K. Kodaka,
\emph{Anzai and Furstenberg transformations on the $2$-torus and topologically quasi-discrete spectrum},
Canad. Math. Bull. {38} (1995), no.~1, 87--92.

\bibitem[20]{lmp92}
A. T. Lau, P. Milnes and J. S. Pym,
\emph{Compactifications of locally compact groups and closed subgroups},
Trans. Amer. Math. Soc. {329} (1992), no.~1, 97--115.

\bibitem[21]{lp86}
A. T. Lau and A. L. T. Paterson,
\emph{The exact cardinality of the set of topological left invariant means on an amenable locally compact group},
Proc. Amer. Math. Soc. {98} (1986), no.~1, 75--80.

\end{thebibliography}
\end{document}